\documentclass[11pt]{amsart}

\usepackage{fullpage}
\usepackage{amsthm}
\usepackage{amssymb}
\usepackage{amsmath}
\usepackage{graphicx}
\usepackage{tikz}
\usepackage{mathrsfs}
\usepackage{float}
\usepackage{makecell}
\usepackage{amscd}
\usepackage{multirow}
\usepackage{xspace}
\usepackage{verbatim}
\usepackage{hhline}
\usepackage{cite}
\usepackage{booktabs}

\newcommand{\R}{\mathbb{R}}

\newcommand{\E}{\mathbb{E}}

\newcommand{\Var}{\text{Var}}

\newcommand{\spcap}{\mathcal{C}}
\newcommand{\B}{\mathcal{B}}

\renewcommand{\epsilon}{\varepsilon}

\DeclareMathOperator{\dist}{dist}
\DeclareMathOperator{\Bin}{Bin}

\newcommand\polymake{\texttt{polymake}\xspace}

\usepackage{relsize}
\newtheorem{theorem}{Theorem}
\newtheorem{lemma}[theorem]{Lemma}

\newtheorem{corollary}[theorem]{Corollary}
\newtheorem{claim}[theorem]{Claim}
\theoremstyle{definition}

\newtheorem{definition}[theorem]{Definition}
\newtheorem{question}[theorem]{Question}
\newtheorem{remark}[theorem]{Remark}
\title{Doubly random polytopes}
\author{Andrew Newman}
\thanks{Technische Universit\"at Berlin (former), Carnegie Mellon University (current)}
\address{Technische Universit\"at Berlin, Chair of Discrete Mathematics/Geometry,  Strasse des 17. Juni 136, 10623 Berlin, Germany (affiliation at time of writing); Carnegie Mellon University, 5000 Forbes Ave, Pittsburgh, PA 15213 (current affiliation)}

\email{anewman@andrew.cmu.edu}
\date{\today}
\begin{document}
\maketitle
\begin{abstract}
A two-step model for generating random polytopes is considered. For parameters $d$, $m$, and $p$, the first step is to generate a simple polytope $P$ whose facets are given by $m$ uniform random hyperplanes tangent to the unit sphere in $\R^d$, and the second step is to sample each vertex of $P$ independently with probability $p$ and let $Q$ be the convex hull of the sampled vertices. We establish results on how well $Q$ approximates the unit sphere in terms of $m$ and $p$ as well as asymptotics on the combinatorial complexity of $Q$ for certain regimes of $p$.\\

\noindent
Keywords: random polytopes, random graphs, probabilistic method
\end{abstract}
\section{Introduction}

A standard family of models for random polytopes comes from the convex hull of points chosen randomly from the interior or the boundary of some fixed convex body $K$. There is quite extensive literature on this model from a variety of perspectives for example \cite{BMT, Borgwardt, BeneathBeyond, Raynaud, Barany89, SW2003, SW20032, BFV, GiftWrapping, ReitznerBoundary}. On the other hand there are random 0/1 polytopes studied by \cite{Ziegler2000, Kaibel2004, Furedi1986, BaranyPor2001, KKS1995}. Here we examine a model first posed by Michael Joswig that is in some sense a combination of these two perspectives. 

For parameters $d$ and $m$ positive integers, with $m$ sufficiently larger than $d$, and $p \in [0, 1]$ we will sample a random polytope $Q$ from our model $P_2(d, m, p)$ via the following series of steps.  First generate a polytope $P$ as the convex hull of $m$ random points on the boundary of the unit sphere in $\R^d$, then take its dual $P^{\circ}$. Next generate $Q$ from $P^{\circ}$ by taking the convex hull a random collection of the vertices of $P^{\circ}$ obtained by including each vertex of $P^{\circ}$ in the collection independently with probability $p$. This is naturally a composition of two models for random polytope and it will be helpful to have notation for each one. The first model we will denote by $P_1(d, m)$ which generates a polytope via the convex hull of $m$ random points on the unit sphere in $\R^d$. The second model we will denote $\B(P, p)$ for $P$ a polytope and $p \in [0, 1]$ which will generate a random polytope via the convex hull of a binomial random sample of the vertices of $P$ including each vertex of $P$ in the sample independently with probability $p$. Thus $Q \sim P_2(d, m, p)$ is $Q \sim \B(P^{\circ}, p)$ where $P \sim P_1(d, m)$.

The decision to start with a random inscribed polytope and then take its polar dual, rather than to directly start with a random circumscribed polytope is just because this approach makes some of the arguments easier. One could sample $P^{\circ}$ directly by taking $m$ hyperplanes tangent to the unit sphere and $P^{\circ}$ as the intersection of the half-space containing the origin for each hyperplane. In this way $P_2(d, m, p)$ is related to models of random polytopes generated by random half-spaces as studied for example in \cite{KellyTolle, SchmidtMattheiss, HugReitznerSchneider}. 

An important result on $P_1(d, m)$ that we use extensively when studying $P_2(d, m, p)$ is the following result of Buchta, M\"{u}ller, and Tichy. Here and throughout we use $f_i$ to denote the $i$th entry of the $f$-vector of a polytope, i.e. its number of $i$-dimensional faces.

\begin{theorem}[\hspace{1sp}\cite{BMT}]\label{BMTtheorem}
Fix $d \geq 2$, for $P \sim P_1(d, m)$,
\[\lim_{m \rightarrow \infty} \frac{\E(f_{d - 1}(P))}{m}\]
is an explicit constant $F(d)$ defined by 
\[F(d) := \frac{2}{d} \gamma_{(d - 1)^2} \gamma_{d - 1}^{-(d - 1)}\]
where $\left\{\gamma_k\right\}_{k = 0}^{\infty}$ given by the recurrence $\gamma_0 = \frac{1}{2}$ and $\gamma_{k + 1} = \frac{1}{2 \pi (k + 1) \gamma_k}$. 
\end{theorem}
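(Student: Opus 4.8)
Since this is a classical result I only outline the argument. With probability one the points $x_1,\dots,x_m$ drawn from $P_1(d,m)$ are in general position, so $P=\operatorname{conv}(x_1,\dots,x_m)$ is a simplicial $d$-polytope and every facet is the convex hull of exactly $d$ of the $x_i$. By linearity of expectation and exchangeability,
$$\E(f_{d-1}(P)) = \binom{m}{d}\,\Pr\big(\operatorname{conv}(x_1,\dots,x_d)\text{ is a facet of }P\big).$$
The affine hull $H$ of $x_1,\dots,x_d$ splits $S^{d-1}$ into two spherical caps of normalized surface areas $\gamma$ and $1-\gamma$, where $\gamma\in(0,\tfrac12]$ is the area of the smaller one. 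Given $H$, the simplex $\operatorname{conv}(x_1,\dots,x_d)$ is a facet exactly when $x_{d+1},\dots,x_m$ all lie on one side of $H$; almost surely none of them lies on $H$, so this means they all fall in a single cap, and as the two such events are disjoint for $m>d$,
$$\Pr\big(\operatorname{conv}(x_1,\dots,x_d)\text{ is a facet}\mid H\big)=\gamma^{m-d}+(1-\gamma)^{m-d}.$$
Hence $\E(f_{d-1}(P))=\binom{m}{d}\,\E\big[\gamma^{m-d}+(1-\gamma)^{m-d}\big]$; as $\gamma\le\tfrac12$ the first summand is exponentially small, and everything reduces to the law of the cap area $\gamma$, indeed to the behaviour of its density near $0$.

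Let $g$ denote the density of $\gamma$ on $(0,\tfrac12]$. Since $\binom{m}{d}\sim m^{d}/d!$, the theorem amounts to the asymptotics $\int_{0}^{1/2}(1-a)^{m-d}g(a)\,da\sim C m^{-(d-1)}$, which by a Laplace/Watson-type argument are controlled solely by $g$ near the origin. By a scaling argument one expects $g(a)\sim c\,a^{d-2}$ as $a\to0^+$ — consistent with linear growth of $\E(f_{d-1}(P))$, and exact in the case $d=2$, where $g\equiv 2$ on $(0,\tfrac12)$ — and then
$$\int_{0}^{1/2}(1-a)^{m-d}g(a)\,da\ \sim\ c\int_{0}^{\infty}e^{-ma}a^{d-2}\,da\ =\ \frac{c\,(d-2)!}{m^{d-1}},$$
so $\lim_{m}\E(f_{d-1}(P))/m=c/(d(d-1))$. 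It remains to evaluate $c=\lim_{a\to0^+}g(a)/a^{d-2}$.

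For this one passes to the local picture at a point of $S^{d-1}$: near the north pole the sphere is the graph of $x_d=1-\tfrac12|x'|^2+O(|x'|^4)$ over the tangent hyperplane $\R^{d-1}$, on which the normalized surface measure agrees with $\omega_{d-1}^{-1}$ times Lebesgue measure up to lower-order terms, and a short computation shows that the affine hull of $d$ nearby points cuts off a cap whose orthogonal projection onto the tangent plane is, to leading order, the \emph{circumball} of the $d$ projected points, of volume $\kappa_{d-1}R^{d-1}$ with $R$ the circumradius. Thus $c$ is governed by the small-$R$ behaviour of the circumradius of $d$ locally uniform points in $\R^{d-1}$; the affine Blaschke--Petkantschin formula factors their joint law into the circumcenter, the radius $R$, and $d$ points on the resulting $(d-2)$-sphere, with Jacobian proportional to the $(d-1)$-volume of the simplex they span, which converts $c$ into a moment of the volume of a random simplex inscribed in a Euclidean sphere. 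That moment is the classical constant (cf.\ Miles's evaluations of moments of isotropic random simplices), and the recursion $\gamma_0=\tfrac12$, $\gamma_{k+1}=\tfrac1{2\pi(k+1)\gamma_k}$ is precisely the bookkeeping of the $\Gamma$-function and surface-area factors that arise there; assembling everything yields $c=2(d-1)\gamma_{(d-1)^2}\gamma_{d-1}^{-(d-1)}$ and hence the stated limit $\tfrac2d\gamma_{(d-1)^2}\gamma_{d-1}^{-(d-1)}$. (For $d=2$ this reads $\gamma_1\gamma_1^{-1}=1$, matching $f_1(P)=m$.)

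The principal obstacle is the asymptotic analysis, not the set-up: one must justify exchanging the limit with the integral, show that hyperplanes whose smaller cap is not of order $1/m$ contribute negligibly, and --- most delicately --- track every normalization constant through the spherical-to-planar approximation and the Blaschke--Petkantschin change of variables so as to land on exactly $\gamma_{(d-1)^2}\gamma_{d-1}^{-(d-1)}$ rather than a neighbouring expression.
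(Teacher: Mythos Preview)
The paper does not prove this statement: Theorem~\ref{BMTtheorem} is quoted from Buchta--M\"uller--Tichy \cite{BMT} and used as a black box, so there is no ``paper's own proof'' to compare against.

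That said, your sketch follows the classical line of the original argument: the facet-indicator decomposition, conditioning on the hyperplane to reduce to the cap-area distribution, and a Laplace-type analysis near $a=0$ together with a Blaschke--Petkantschin change of variables to identify the constant. The outline is correct in spirit, though as you yourself note, the identification of the leading constant as exactly $\tfrac{2}{d}\gamma_{(d-1)^2}\gamma_{d-1}^{-(d-1)}$ is asserted rather than derived here; a full proof would need the explicit computation of $c$ via Miles's moment formulas or the integral-geometric machinery in \cite{BMT}. For the purposes of this paper nothing further is required, since the result is only invoked.
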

Clearly $F(2) = 1$ and $F(3) = 2$; the first few nontrivial values are 
\begin{eqnarray*}
F(4) &=& \frac{24 \pi^2}{35} \approx 6.77, \\
F(5) &=& \frac{286}{9} \approx 31.78, \text{ and}\\
F(6) &=& \frac{1296000 \pi^4}{676039} \approx 186.74.
\end{eqnarray*}
Now $F(d)$ grows very quickly, from the recurrence of Theorem \ref{BMTtheorem} one can show that it grows like $\exp(\Theta(d \log d))$, but nonetheless in fixed dimension $d$, $P \sim P_1(d, m)$ has complexity which is $O(m)$. Recall that the \emph{complexity} of a polytope is the sum of the entries of its $f$-vector. McMullen's upper bound theorem tell us that the complexity of a $d$-dimensional polytope on $n$ vertices is $O(n^{\lfloor d/2 \rfloor})$. Moreover \emph{cyclic polytopes} realize this upper bound on the complexity, see for example \cite{Ziegler1995} for more background about extremal complexity of polytopes. 

The low complexity of polytopes in $P \sim P_1(d, m)$ is a motivation to study the two-step model. From a computational perspective, one may hope to compute the double description of a polytope in a reasonable amount of time only if the polytope has low complexity. For $P \sim P_1(d, m)$, for example, a special case of the main result of Borgwardt in \cite{BeneathBeyond} is that for $d$ fixed computing the double description of $P \sim P_1(d, m)$ has expected algorithmic complexity of order $O(m^2)$. By proving low complexity results on $Q \sim P_2(d, m , p)$ we can establish classes of low complexity polytopes that are \emph{neither simple nor simplicial}, unlike $P \sim P_1(d, m)$ which is a simplicial polytope with probability 1. This allows for ways to generate polytopes with new combinatorial types, either theoretical though the study of $Q \sim P_2(d, m, p)$, or, when low complexity holds, explicit computation of the $f$-vector of instances of $Q \sim P_2(d, m, p)$.

A particular application for this doubly random model comes from work of Joswig, Kaluba, and Ruff \cite{JKR}. The authors of \cite{JKR} have applied random techniques for polytopes in the context of machine learning, approximating the shape of data via randomized constructions and using the resulting polytope as a classifier.  In this case it is critical to have a rich class of polytopes of low complexity. For such polytopes it is reasonable to expect that computing a double description is feasible in practice. Toward this goal we introduce the following definition suggested by Joswig.
\begin{definition}
  Let $P_1,P_2,\dots$ be a family of $d$-dimensional polytopes in $\R^d$ such that $\lim_{i\to\infty} f_0(P_i)=+\infty$.
  The family is \emph{slender} if there are constants, $c,c'>0$, only depending on $d$, such that \[c\cdot f_0(P_i)\leq f_{d-1}(P_i) \leq c'\cdot f_0(P_i)\] for all $i$.
\end{definition}
Now $P \sim P_1(d, m)$, $m \rightarrow \infty$ form a slender family of polytopes with $c' = F(d) + \epsilon$ and $c = F(d) - \epsilon$ for any $\epsilon > 0$ and $m$ large enough. By establishing low complexity results on $P_2(d, m, p)$ one may probabilistically establish families of slender polytopes which will in general be neither simple nor simplicial.

We show here that for $p$ sufficiently close to 1, $Q \sim P_2(d, m, p)$, $m \rightarrow \infty$ form a slender family of polytopes with explicit $c$ and $c'$. How close $p$ must be to 1 for the result to hold will depend on $d$ but not on $m$. We also establish results about how close $Q \sim P_2(d, m, p)$ is to the unit sphere in $\R^d$; this has direct applications to \cite{JKR} which we outline in the appendix.

\section{Results}
We are primarily interested in number of facets for $Q \sim P_2(d, m, p)$ in the situation where $d$ and $p$ are fixed and $m$ tends to infinity. The main result in this direction is the following upper bound on the expected number of facets.

\begin{theorem}\label{upperbound}
For each $d \geq 2$ there exists a constant $C_d$ so that for $p >1 -  1/(8(d - 1))$ the expected number of facets of $Q \sim P_2(d, m, p)$ is asymptotically at most
\[\left(1 + F(d)p^dq \left(1 + \frac{C_d q}{1 - 8(d - 1)q} \right) \right) m \]
where $q = 1- p$.
\end{theorem}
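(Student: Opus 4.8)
The plan is to pass to the polar dual. Since $P\sim P_1(d,m)$ is simplicial with probability $1$, its polar $P^{\circ}$ is simple, and the ridge graph of $P$ (equivalently, the graph of $P^{\circ}$) is $d$-regular: a facet of $P$ is a $(d-1)$-simplex, hence has exactly $d$ ridges, each shared with exactly one, and a distinct, other facet. Writing $Q\sim\B(P^{\circ},p)$ as the convex hull of a $p$-random subset of the vertices of $P^{\circ}$, polarity turns this into deletion of constraints: $Q^{\circ}$ is obtained from $P=\bigcap_{F}H_F^{-}$ by independently discarding, with probability $q$, the halfspace $H_F^{-}$ of each facet $F$ of $P$. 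The facets of $Q$ correspond to the vertices of $Q^{\circ}$, and these split into two disjoint classes: \emph{old} vertices, i.e. vertices of $P$ that survive as vertices of the larger polytope $Q^{\circ}$ — there are at most $f_0(P)=m$ of these, which is exactly the leading summand $1\cdot m$ — and \emph{new} vertices, i.e. vertices of $Q^{\circ}$ lying strictly outside $P$. Everything reduces to bounding the expected number of new vertices.

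For the new vertices I would run a first-moment argument. Almost surely no $d+1$ facet-hyperplanes of $P$ are concurrent outside $P$, so a new vertex $w$ is the intersection of exactly $d$ facet-hyperplanes and lies outside $P$; let $k(w)\ge1$ be the number of facets of $P$ that $w$ sees, equivalently the number of halfspaces $H_F^{-}$ violated by $w$. Then $w$ is a vertex of $Q^{\circ}$ precisely when the $d$ facets through $w$ survive and all $k(w)$ violated facets are deleted — two events on disjoint facet sets — so $\Pr[w\in V(Q^{\circ})]=p^{d}q^{k(w)}$, and hence $\E[\,\#\text{new facets of }Q\mid P\,]=p^{d}\sum_{k\ge1}q^{k}a_k(P)$, where $a_k(P)$ counts the potential new vertices $w$ with $k(w)=k$. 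The structural input is that the set of facets visible from an external point $w$ is a connected subgraph of the $d$-regular ridge graph of $P$; I group the potential new vertices by this patch $T_w$. The number of connected $k$-vertex patches of a $d$-regular graph through a fixed facet is at most $(cd)^{k}$ for an absolute constant $c$ (a lattice-animal bound), and the potential new vertices visible from a fixed patch $T$ are the vertices, off the ``floor'', of the bulge $\{x:x\notin H_F^{-}\ \forall F\in T,\ x\in H_{F'}^{-}\ \forall F'\notin T\}$; this bulge has at most $|T|+d|T|=O(dk)$ facets, so the Upper Bound Theorem bounds its vertex count by a polynomial in $k$. Combining gives $a_k(P)\le f_{d-1}(P)\,(cd)^{k}\,\mathrm{poly}_d(k)$ for $k\ge2$.

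The delicate term is $k=1$: here $a_1(P)=\sum_{F}\beta_1(F)$, where $\beta_1(F)\ge1$ is the number of vertices, off $F$, of the bulge obtained by deleting $H_F^{-}$ alone, and the heart of the proof is to show $a_1(P)=f_{d-1}(P)(1+o(1))$ as $m\to\infty$. For this I would use that $P\sim P_1(d,m)$ converges to the unit ball, so that with high probability, uniformly over facets, the bulge beyond $F$ is a simplex whose base is $F$ and whose remaining walls are the hyperplanes of the $d$ ridge-neighbours of $F$, giving exactly one new vertex. With this, $\E[\,\#\text{new facets}\mid P\,]\le p^{d}q\,f_{d-1}(P)(1+o(1))+p^{d}f_{d-1}(P)\sum_{k\ge2}q^{k}(cd)^{k}\mathrm{poly}_d(k)$; when $8(d-1)q<1$ (the hypothesis on $p$) the tail converges, and since $cd<8(d-1)$ for $d\ge2$ the polynomial factors and the finitely many small-$k$ terms can be absorbed into a constant $C_d$, leaving a tail bounded by $C_d q^{2}f_{d-1}(P)/(1-8(d-1)q)$. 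Taking expectations over $P$ and invoking Theorem~\ref{BMTtheorem}, $\E[f_{d-1}(P)]\sim F(d)m$, yields the claimed bound $\bigl(1+F(d)p^{d}q(1+C_dq/(1-8(d-1)q))\bigr)m$.

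The main obstacle is the $k=1$ estimate $a_1(P)\sim f_{d-1}(P)$: the leading constant in the theorem leaves no slack, so one genuinely has to control the local shape of the bulge beyond a typical facet — in particular rule out extra bounding walls — using roundness of $P$ at the scale of a single facet, rather than merely globally. A secondary, more routine difficulty is making the constant $8(d-1)$ come out exactly: one must choose the lattice-animal and rim estimates generously enough to swallow the polynomial Upper-Bound-Theorem factors while keeping the geometric ratio strictly below $1/q$.
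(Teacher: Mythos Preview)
Your approach is essentially the paper's, read through polar duality: the paper works directly with the simple polytope $P^{\circ}$ and its facets, splits facets of $Q$ into old and new, observes that $\mathcal{C}(\sigma)$ is a connected subgraph of the $d$-regular $1$-skeleton, and runs the same first-moment sum. Where the paper enumerates connected caps $\mathcal{C}(\sigma)$ of size $t=k+d$ and crudely bounds the number of facets of $\mathrm{conv}(\mathcal{C}(\sigma))$ by $2^{t}$, you enumerate connected deleted patches $T$ of size $k$ and bound the number of bulge vertices by the Upper Bound Theorem. Both give a geometric series converging for $q<1/(8(d-1))$, and the paper's version has the advantage that the $2^{t}$ bound requires no analysis of which hyperplanes actually bound the bulge.

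The one place where your proposal goes astray is the $k=1$ term, which you call ``the main obstacle'' and propose to handle by a roundness argument specific to $P\sim P_1(d,m)$. In fact $\beta_1(F)=1$ \emph{always} for a nondegenerate simple $P^{\circ}$, by exactly the simplex-method argument you already invoke for connectedness: if a new facet $\sigma$ of $Q$ has a unique vertex $v$ of $P^{\circ}$ strictly above it, then each of the $d$ vertices $u_i$ on $\sigma$ has $\ell(u_i)=0<\ell(v)$ for the normal functional $\ell$, and the first step of a monotone path from $u_i$ inside $\mathcal{C}(\sigma)$ must increase $\ell$, hence must go to $v$ (all other members of $\mathcal{C}(\sigma)$ have $\ell=0$). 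Thus every $u_i$ is a neighbour of $v$, and since $v$ has exactly $d$ neighbours, $\sigma$ is forced to be $\mathrm{conv}(\text{neighbours of }v)$. Dually, deleting a single halfspace $H_F^{-}$ creates exactly one new vertex, so $a_1(P)=f_{d-1}(P)$ on the nose. This is how the paper gets the leading $nqp^{d}$ term as a deterministic statement (Theorem~\ref{upperbounddeterministic}), and then simply plugs in $\E[f_{d-1}(P)]\sim F(d)m$. Your proposed ``with high probability the bulge is a simplex'' route is both unnecessary and mildly dangerous: a high-probability bound on $a_1(P)$ does not by itself control $\E[a_1(P)]$ without a separate tail estimate on the bad event, which you would then have to supply.
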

The restriction that $p$ be close to 1 seems to be an artifact of the proof rather than a natural phenomenon of $P_2(d, m, p)$, we include some remarks about other regimes of $p$ at the end of the article. 

We also provide a general lower bound on the expected number of facets together with a concentration inequality that is useful for constructing slender families of polytopes with some control over $c$ and $c'$. The lower bound holds for all values of $p$.
\begin{theorem}\label{lowerbound}
For $Q \sim P_2(d, m, p)$ with $d$ fixed and $p \in [0, 1]$, for any $\delta > 0$ with high probability the number of facets of $Q$ is at least
\[(1 - \delta)(p^{d } + F(d) p^d q)m \]
where $q = 1 - p$
\end{theorem}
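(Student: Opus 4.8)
\emph{Proof proposal.} The plan is to produce two explicit, disjoint families of facets of $Q$ whose combined size is at least $(1-\delta)(p^d+F(d)p^dq)m$ with high probability. Throughout I work with the simple polytope $P^{\circ}$: almost surely $P\sim P_1(d,m)$ is simplicial and in general position, so $P^{\circ}$ is a simple $d$-polytope with $m$ facets and $f_0(P^{\circ})=f_{d-1}(P)$ vertices, each vertex lying in exactly $d$ facets. Sampling $Q\sim\B(P^{\circ},p)$ means $Q=\mathrm{conv}(S)$, where each vertex of $P^{\circ}$ is placed in $S$ independently with probability $p$.

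The first family (``surviving facets''): for a facet $G$ of $P^{\circ}$ with affine hull $H_G$, the section $H_G\cap Q=\mathrm{conv}(S\cap V(G))$ is a facet of $Q$ as soon as $S\cap V(G)$ affinely spans $G$; since $V(G)$ contains an affinely independent $d$-subset, this happens with conditional probability at least $p^d$. The second family (``scar simplices''): for a vertex $w$ of $P^{\circ}$ with neighbours $u_1,\dots,u_d$, let $A_w$ be the event that $w\notin S$ but $u_1,\dots,u_d\in S$, which has conditional probability $qp^d$. Here I would invoke the elementary fact that deleting a vertex $w$ from a simple polytope $R$ slices it by the hyperplane through the neighbours of $w$: $\mathrm{conv}(V(R)\setminus\{w\})=R\cap H_w^{-}$, where $H_w=\mathrm{aff}(u_1,\dots,u_d)$ and $H_w^-$ is the side away from $w$. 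This holds because the tangent cone of $R$ at a simple vertex is simplicial, so every other vertex of $R$ lies weakly on the far side of $H_w$. Consequently $\sigma_w:=\mathrm{conv}(u_1,\dots,u_d)$ is a facet of $\mathrm{conv}(V(P^{\circ})\setminus\{w\})$, a polytope containing $Q$; on $A_w$ all vertices of $\sigma_w$ lie in $Q$, so a supporting hyperplane realising $\sigma_w$ as a face of the larger polytope supports $Q$ with the same intersection, making $\sigma_w$ a facet of $Q$.

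These two families are disjoint and internally non-repeating: surviving facets lie in facet hyperplanes of $P^{\circ}$, while in general position the hyperplanes $\mathrm{aff}(u_1,\dots,u_d)$ are neither facet hyperplanes of $P^{\circ}$ nor equal to one another. Hence $f_{d-1}(Q)\ge X+Y$, where $X$ counts surviving facets and $Y=|\{w:A_w\text{ holds}\}|$, and conditionally on $P$ one has $\E(X\mid P)\ge p^dm$ and $\E(Y\mid P)=qp^d f_0(P^{\circ})=qp^d f_{d-1}(P)$. To upgrade this to a high-probability statement I would argue in two layers. For the randomness of $P$: Theorem~\ref{BMTtheorem} together with known variance/concentration estimates for random inscribed polytopes (Efron--Stein--type bounds giving $\Var(f_{d-1}(P))=o(m^2)$) yields $f_{d-1}(P)\ge(1-\tfrac{\delta}{2})F(d)m$ with high probability; condition on such a $P$, also enjoying the finitely many general-position properties used above. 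For the binomial randomness: $X$ and $Y$ are sums of indicators with bounded dependency---the indicator that $G$ survives depends only on the Bernoullis of $V(G)$, two such indicators are independent unless $G,G'$ share a vertex, and simplicity gives $\sum_G f_0(G)=d\,f_{d-1}(P)$, so $\Var(X\mid P)=O_d(m)$; likewise $\mathbf{1}[A_w]$ depends only on $w$ and its $d$ neighbours, so $\Var(Y\mid P)=O_d(m)$. Chebyshev's inequality then gives $X+Y\ge p^dm+qp^d f_{d-1}(P)-\tfrac{\delta}{2}p^dm\ge(1-\delta)(p^d+F(d)p^dq)m$ with high probability, and a union bound over the two layers completes the proof.

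The step I expect to be the main obstacle is exactly the passage from an expectation bound to a high-probability bound: $f_{d-1}(Q)$ is not a Lipschitz function of the retention variables (inserting one vertex can change the facet count by an amount polynomial in $m$), so it cannot be concentrated directly; the remedy is never to reason about $f_{d-1}(Q)$ itself but only about the structured lower bound $X+Y$, whose terms have bounded dependency, and to handle the first model's fluctuations separately via its own (one-sided) concentration. A secondary point needing care is the ``vertex deletion slices a simple polytope'' lemma together with the routine general-position claims that keep the two facet families disjoint.
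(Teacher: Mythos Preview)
Your proposal is correct and follows essentially the same route as the paper: the same two facet families (the paper's ``old facets'' and ``shallow cuts'' are your ``surviving facets'' and ``scar simplices''), the same Efron--Stein concentration for $f_{d-1}(P)$, and then concentration for the two counts conditioned on $P$. The only cosmetic difference is that the paper uses McDiarmid's inequality for the shallow-cut count and stochastic domination by $\Bin(m,p^d)$ for the old-facet count, whereas you use bounded-dependency variance bounds plus Chebyshev for both; either works.
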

Here and throughout we use ``with high probability" to mean that a property holds with probability converging to 1 as $m$ tends to infinity. From the bounds in Theorems \ref{upperbound} and \ref{lowerbound} we see that the closer $p$ is to 1 the more accurate the estimate on the number of facets of $Q \sim P_2(d, m, p)$.

Lastly we consider the question of how well $Q \sim P_2(d, m, p)$ approximates the unit sphere via the Hausdorff distance. Observe that $P \sim P_1(d, m)$ is a polytope inscribed in the unit sphere and $P^{\circ}$ is circumscribed around the unit sphere. However, once we sample $Q$ as the convex hull of a sample of vertices of $P^{\circ}$, we no longer have a guarantee that $Q$ is circumscribed around the unit sphere anymore. Thus to understand how close $Q$ is to the sphere we have to establish how close the vertices of $Q$, which are outside the unit sphere, are to the sphere and how far we ``cut into" the sphere when we remove the vertices of $P^{\circ}$ that do not belong to the sampled vertices for $Q$. By considering both of these questions, we establish the following for $d$ fixed, $m$ tending to infinity, and $p$ either fixed or depending on $m$.

\begin{theorem}\label{Hausdorffdistance}
For $d \geq 2$ fixed and $pm^{1/(d - 1)}$ tending to infinity as $m$ tends to infinity, with high probability the Hausdorff distance between $Q \sim P_2(d, m, p)$ and the unit sphere is at most
\[O \left(\frac{ \log^2(pm) \log^{2/(d - 1)}(m)}{p^2m^{2/(d - 1)}} + \frac{\log^{2/(d-1)}(pm^d)}{m^{2/(d - 1)}} \right). \]
\end{theorem}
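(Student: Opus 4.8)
The plan is to bound the Hausdorff distance by controlling two separate quantities, as suggested by the discussion preceding the statement: (i) how far the vertices of $Q$ lie outside the unit sphere $S^{d-1}$, and (ii) how deeply the boundary of $Q$ cuts inside $S^{d-1}$ near a region where the vertices of $P^\circ$ were discarded. Write $d_H(Q, S^{d-1}) = \max(A, B)$ where $A = \max_{x \in Q} \dist(x, S^{d-1})$ (the outward error) and $B = \max_{u \in S^{d-1}} \dist(u, Q)$ (the inward error). Since $P^\circ \supseteq Q$ and $P^\circ$ is circumscribed about the sphere, $A$ is bounded by the maximal distance of a vertex of $P^\circ$ from the sphere, which is a purely $P_1(d,m)$ question: a vertex of $P^\circ$ dual to a facet $\sigma$ of $P \sim P_1(d,m)$ sits at distance $\|v_\sigma\| - 1$ where $v_\sigma$ is the pole of the supporting hyperplane of $\sigma$, and this is governed by the circumradius of the largest ``empty cap'' of $P$. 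Standard results on random inscribed polytopes (the expected maximal facet diameter / cap size for $m$ uniform points on $S^{d-1}$) give that with high probability every cap of $P$ has angular radius $O\big((\log m / m)^{1/(d-1)}\big)$, which translates to $A = O\big((\log m / m)^{2/(d-1)}\big)$ after a Taylor expansion of $1 - \cos$. This is exactly the second term in the stated bound.

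The first term comes from bounding $B$, and this is where the sampling probability $p$ enters. Fix a direction $u \in S^{d-1}$; I want to show $Q$ has a point close to $u$ (equivalently, close to the point $(\|v_\sigma\|)^{-1} v_\sigma$ where $\sigma$ is the facet of $P$ whose outward normal points toward $u$, or close to $u$ itself). The idea is: among the vertices of $P^\circ$ lying in a small spherical cap around $u$ of angular radius $r$, with $r$ chosen so that this cap contains roughly $p^{-1} \cdot (\text{polylog})$ vertices of $P^\circ$ in expectation, at least one is sampled into $Q$ with high probability (union bound over a net of directions $u$ of size $m^{O(1)}$, each requiring a failure probability $\le m^{-\omega(1)}$, forces the ``polylog'' factor $\log(pm^d)$ in the local expected count, hence the $\log^2(pm)$-type factors after solving for $r$). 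The number of vertices of $P^\circ$ in a cap of radius $r$ around a generic direction is, with high probability and up to constants, $F(d)$ times the number of points of $P$ in that cap, which is $\Theta(m r^{d-1})$; setting $m r^{d-1} \asymp p^{-1}\log(\cdot)$ gives $r \asymp \big(\log/(pm)\big)^{1/(d-1)}$, and then the inward cut has depth $\asymp r^2 \asymp \big(\log^2(pm)(\log m)^{2/(d-1)}\big) / \big(p^2 m^{2/(d-1)}\big)$, matching the first term once one tracks the separate $\log(pm)$ (from the Chernoff bound on the binomial sample) and $(\log m)^{1/(d-1)}$ (from the net / cap-regularity estimates) contributions to the numerator. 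The hypothesis $pm^{1/(d-1)} \to \infty$ is precisely what guarantees $r \to 0$ and that caps of radius $r$ are nonempty of $P$-vertices, so the argument does not degenerate.

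The main obstacle I anticipate is the second step: proving a sufficiently uniform lower bound on the number of \emph{sampled} vertices of $Q$ in every small cap simultaneously, and then converting ``$Q$ has a vertex in the cap around $u$'' into a genuine bound on $\dist(u, Q)$. The first difficulty is that the vertices of $P^\circ$ are not independent and their spatial distribution on the circumscribed sphere is only understood on average (via Theorem \ref{BMTtheorem} and its local refinements); one needs a high-probability local regularity statement — that every cap of the appropriate radius contains $\Theta(m r^{d-1})$ vertices of $P^\circ$ — which likely requires revisiting the geometry of $P_1(d,m)$ (e.g. bounding the number of facets of $P$ meeting a fixed cap, and the fact that no facet of $P$ is too large, which also feeds into step (i)). The second difficulty is geometric: having a vertex $w$ of $Q$ within angular distance $r$ of $u$ and within radial distance $A$ of the sphere does not immediately place a \emph{point of $Q$} within the claimed distance of $u$, because $Q$ near $w$ could still be ``tilted'' away from $u$; one resolves this by using that $P^\circ$ itself contains the sphere, so the facet of $P$ pointing toward $u$ certifies that some nearby vertices of $P^\circ$ surround the ray through $u$, and a convex combination of the \emph{sampled} ones among them — which exist in every direction by the covering argument — lies within distance $O(r^2 + A)$ of $u$. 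Assembling these two ingredients, taking the max of the two error terms, and a final union bound over the direction net completes the proof.
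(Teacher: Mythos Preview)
Your outer bound (the term $A$) is handled the same way as in the paper (Lemma~\ref{HausdorffOutside}): vertices of $P^{\circ}$ far from the sphere correspond to large empty caps of $P$, and a union bound over the $\binom{m}{d}$ potential facets gives the $((\log(pm^d))/m)^{2/(d-1)}$ term. So that half is fine.

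For the inner bound (the term $B$), however, your route diverges from the paper's and has a real gap. The paper does \emph{not} argue by finding a sampled vertex in every small cap. Instead it bounds, for every new facet $\sigma$ of $Q$, the distance from the origin to the hyperplane of $\sigma$ directly, via two deterministic–then–probabilistic ingredients: (i) every facet (hence every edge) of $P^{\circ}$ has diameter $O((\log m/m)^{1/(d-1)})$ with high probability (Claim~\ref{shortedges}); and (ii) the number of vertices of $P^{\circ}$ lying above $\sigma$ is $O((\log(pm))/p)$ with high probability, by a one-line union bound over the $\binom{n}{d}$ candidate simplicial facets (Claim~\ref{DiscreteCaps}). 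One then walks in the $1$-skeleton of $P^{\circ}$ from the point $w$ where the outward normal to $\sigma$ pierces $\partial P^{\circ}$ to a vertex $v\in\sigma$; this path stays in $\mathcal{C}(\sigma)$, so has at most $O((\log(pm))/p)$ edges, each of length $O((\log m/m)^{1/(d-1)})$. Since $w$ is normal to $\sigma$, the law of cosines gives $\dist(0,\sigma)\ge 1-\tfrac12\dist(w,v)^2$, and squaring the path length produces exactly the first term of the bound.

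The gap in your version is the passage from ``there is a sampled vertex $w$ within angular distance $r$ of $u$'' to ``the inward cut at $u$ is $O(r^2)$.'' A single nearby vertex only gives $\dist(u,Q)\le\dist(u,w)=O(r)$, not $O(r^2)$: with $\|w\|\in[1,1+A]$ and angle $\theta\le r$ one has $\|u-w\|^2=(\|w\|-1)^2+2\|w\|(1-\cos\theta)\asymp A^2+r^2$, so $\dist(u,w)\asymp r$. To upgrade to $O(r^2)$ you would need the convex hull of nearby sampled vertices to actually meet the ray through $u$ at radius $\ge 1-O(r^2)$, i.e.\ sampled vertices \emph{surrounding} $u$, not merely one vertex near it. Your sentence ``a convex combination of the sampled ones among them --- which exist in every direction by the covering argument'' asserts exactly this, but the covering argument only yields one sampled vertex per cap; it does not produce a simplex of sampled vertices through whose interior the ray passes, and this is the step that has to be proved. (Relatedly, your arithmetic does not close: if $r\asymp(\log/(pm))^{1/(d-1)}$ then $r^2\asymp(\log m)^{2/(d-1)}/(pm)^{2/(d-1)}$, which is \emph{not} the stated first term $\log^2(pm)(\log m)^{2/(d-1)}/(p^2m^{2/(d-1)})$; for $d\ge 3$ your $r^2$ would in fact be smaller, so the discrepancy is not just bookkeeping.) The paper sidesteps the whole ``surround'' issue by working facet-by-facet on $Q$ and using the normal direction $w$, which automatically lies on the correct ray, so the law-of-cosines step gives the quadratic saving for free.
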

The proof of Theorem \ref{Hausdorffdistance} proceeds by showing a lower bound on the largest ball contained in $Q$ and an upper bound on the smallest ball containing $Q$. As a corollary to this method of proof we therefore also obtain upper and lower estimates on the volume of $Q \sim P_2(d, m, p)$.

\section{Complexity in the dense regime}\label{sec:complexity}
In this section we describe the expected number of facets of $P \sim P_2(d, m, p)$ when $p$ is close to 1. The upper and lower bounds on the expected number of faces will be nontrivial for $p > 1 - \delta_d$ where $\delta_d$ is a constant depending only on $d$. Moreover the expected number of faces in this regime will be bounded between $c(d, p) m$ and $C(d, p) m$ where $C(d, p)$ and $c(d, p)$ approach the same value as $p$ tends to one. However, the results do not require that $p$ tends to one with $m$ in order to be able to give useful upper and lower bounds. 

The bounds that are established apply to a more general class of random polytope. In some sense the bounds depend more heavily on the second step of the two-step model than on the first. The upper bound for the dense regime follows from an general upper bound that we prove for simple polytopes $P$ in the model $\B(P, p)$ that holds provided that all subsets of $d + 2$ vertices that do not lie on a facet of $P$ do not lie in the same hyperplane. For simple polytopes, this condition is referred to a \emph{(primal) nondegeneracy} from the point of view of linear programming. Moreover, it is a reasonable assumption for us; $P \sim P_1(d, m)$ will have all of its vertices in general position with probability 1, so $P^{\circ}$ will be nondegenerate. 

To study the facets of $Q \sim \B(P, p)$ for $P$ a simple polytope we observe that the set of facets naturally partitions into \emph{old facets}, those facets that arise as the restriction of the binomial model to a facet of $P$, and \emph{new facets}, those facets that are not contained in any single facet of $P$. Another way to define these notions is by examining the number of vertices of $P$ ``above" a facet of $Q$. To make this precise, one may assume that $P$ contains the origin in its interior and for a facet $\sigma$ of $Q$, we let $\mathcal{C}(\sigma)$ denote the set of vertices of $P$ on the hyperplane determined by $\sigma$ or in the half-space determined by $\sigma$ not containing the origin. More succinctly $\mathcal{C}(\sigma)$ is the set of vertices of $P$ on or above the hyperplane determined by $\sigma$. If $Q$ still contains the origin in its interior, which holds with high probability in the regime of $m$ and $p$ we are interested in, then the old facets of $Q$ are all those facets $\sigma$ with $\mathcal{C}(\sigma) = \sigma$ and the rest are the new facets. A first observation toward enumerating the new facets is the following claim:

\begin{claim}\label{uniquecaps}
If $\sigma$ is a new facet of $Q \sim \B(P, p)$, then the set of vertices on or above above the hyperplane determined by $\sigma$ in $P$ induce a connected subgraph of the 1-skeleton of $P$, whose vertices we will denote by $\spcap(\sigma)$. 
\end{claim}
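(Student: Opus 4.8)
The plan is to work with the hyperplane $H$ determined by the new facet $\sigma$ and the closed halfspace $H^+$ on the far side of $H$ from the origin, so that $\spcap(\sigma)$ is exactly the set of vertices of $P$ lying in $H^+$. I would prove connectivity of the induced subgraph of $\skel_1(P)$ by a standard ``shelling from a linear functional'' argument. Let $\ell$ be the linear functional that is constant equal to some value $t_0$ on $H$ and increasing away from the origin, so that $\spcap(\sigma) = \{v \in V(P) : \ell(v) \geq t_0\}$. Since $P$ is a polytope and $\ell$ is generic on $P$ (we may perturb $\ell$ within $H^+$ without changing which vertices lie in $H^+$, using that $P^\circ$, hence $P$, is in sufficiently general position — or simply note the claim is a purely combinatorial statement about an upper set of a linear functional and any ties can be broken consistently), the superlevel sets of $\ell$ on $\skel_1(P)$ are connected: this is the classical fact underlying linear-programming pivoting and the existence of line shellings, namely that from any vertex $v$ with $\ell(v) \geq t_0$ there is an edge of $P$ along which $\ell$ strictly increases unless $v$ is the $\ell$-maximal vertex, and following such edges keeps us inside $\spcap(\sigma)$ and terminates at the unique maximum.

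Concretely, the key steps are: (i) set up $H$, $H^+$, and $\ell$ and observe $\spcap(\sigma) = \{v : \ell(v) \geq t_0\}$; (ii) recall/verify the local statement that every non-$\ell$-maximal vertex of $P$ has a neighbor in $\skel_1(P)$ with strictly larger $\ell$-value — this is immediate from $P$ being a polytope, since the edges at $v$ point into a full-dimensional cone and $\ell$ is nonconstant, so some edge direction has positive $\ell$-derivative; (iii) conclude that the monotone path from any $v \in \spcap(\sigma)$ reaching the $\ell$-maximum stays in $\spcap(\sigma)$ because $\ell$ only increases along it; (iv) note the $\ell$-maximal vertex of $P$ lies in $H^+$ (it is the farthest vertex of $P$ in the outer normal direction of $\sigma$, and $\sigma$ contains at least $d$ affinely independent vertices of $Q$, all of which are vertices of $P$ in $\overline{H^+}$, so the max is at least $t_0$; since $\sigma$ is a new facet rather than an old one, there is in fact a vertex strictly above $H$, but even $\ell$-value $= t_0$ suffices). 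Hence any two vertices of $\spcap(\sigma)$ are joined within $\spcap(\sigma)$ through the common $\ell$-maximum, giving connectivity.

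I do not expect a serious obstacle here; the only mild subtlety is genericity of $\ell$. If $\ell$ is constant on more than the $d$ vertices of $\sigma$ — which for a \emph{new} facet would require $d+1$ or more vertices of $P$ on $H$, and since these vertices need not lie on a common facet of $P$, this is exactly excluded by the nondegeneracy hypothesis on $P$ once we are in the setting of the dense-regime theorem — there is nothing to fix; in the general statement of the claim one simply breaks ties by an arbitrary fixed generic perturbation of $\ell$, which does not change the vertex set $\spcap(\sigma)$ because those are determined by the strict/non-strict position relative to $H$, and any vertex exactly on $H$ other than those of $\sigma$ can be swept to either side consistently. Thus the argument reduces to the textbook monotone-path fact for $\skel_1$ of a polytope, applied to the superlevel set cut out by $\sigma$'s hyperplane.
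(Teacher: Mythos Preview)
Your proposal is correct and follows essentially the same approach as the paper: both use the linear functional given by the outward normal to the hyperplane of $\sigma$ and argue that from any vertex $v\in\spcap(\sigma)$ a monotone (simplex-method) path in the $1$-skeleton of $P$ leads to the $\ell$-maximal vertex while staying inside $\spcap(\sigma)$. The paper's version is simply terser---it says ``the simplex method starting at $v$ will find a path from $v$ to an optimal vertex $w$\dots and in doing so the path will always stay inside $\spcap(\sigma)$''---whereas you spell out the pivot step and the genericity issue explicitly.
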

\begin{proof}
Let $\sigma$ be a new facet of $Q \sim \B(P, p)$. At any vertex $v$ in $\spcap(\sigma)$ the value of the linear functional given by the normal vector to the hyperplane determined by $\sigma$ is nonnegative. Moreover, the simplex method starting at $v$ will find a path from $v$ to an optimal vertex $w$ with respect to this functional and in doing so the path will always stay inside $\spcap(\sigma)$.
\end{proof}

Ultimately we want to upper bound the number of new facets of $Q \sim P_2(d, m, p)$ by enumeration of choices of $\spcap(\sigma)$. By Claim \ref{uniquecaps} we see that every $\spcap(\sigma)$ is connected so it becomes important to have an enumeration formula for the number of connected subgraphs of the 1-skeleton of $P^{\circ}$ for $P \sim P_1(d, m)$. Indeed we have the following which holds for any $d$-regular graph. A result of this type also appears in a different context in \cite{NP}.
\begin{claim}\label{connectedgraphs}
If $G$ is a $d$-regular graph on $n$ vertices then for each $t \geq 2$, the number of connected induced subgraphs of $G$ on $t$ vertices is at most \[4^{t - 2} d (d - 1)^{t - 2} n.\]
\end{claim}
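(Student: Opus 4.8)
The claim is a purely graph-theoretic statement: in a $d$-regular graph $G$ on $n$ vertices, the number of connected induced subgraphs on $t$ vertices is at most $4^{t-2} d (d-1)^{t-2} n$. My plan is to bound the number of such vertex sets by associating to each connected induced subgraph a canonical spanning structure that is easy to enumerate. The natural candidate is a spanning tree, but arbitrary spanning trees are hard to count; instead I would use a canonical walk or a DFS/BFS exploration of the subgraph. The cleanest approach: fix a root vertex, then encode the connected $t$-set by a closed walk (an Euler-tour-style traversal of a spanning tree) of length $2(t-1)$ starting and ending at the root, where at each step one moves to a neighbor in $G$.

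**Key steps.** First, choose the root: there are $n$ choices, accounting for the factor $n$. Second, fix a spanning tree $T$ of the induced subgraph on the $t$-set rooted at that vertex; a depth-first traversal of $T$ visits each of the $t-1$ tree edges exactly twice, giving a walk of length $2(t-1) = 2t-2$ in $G$. Third, bound the number of such walks: from the root there are at most $d$ choices for the first edge, and at each subsequent step at most $d-1$ choices (since in a tree traversal one never immediately backtracks along... — actually one does backtrack, so one must be slightly careful here; the honest bound is that the first step has $\leq d$ options and each later step has $\leq d-1$ options because you don't repeat the edge you just arrived on). This gives at most $d(d-1)^{2t-3}$ walks per root, hence at most $d(d-1)^{2t-3} n$ rooted traversals. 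Fourth, observe that the map (connected $t$-set) $\mapsto$ (some rooted DFS traversal) is not injective, but every connected $t$-set arises as the vertex set of \emph{at least one} such traversal, so the count of connected $t$-sets is at most the count of traversals: at most $d(d-1)^{2t-3} n \leq 4^{t-2} d (d-1)^{t-2} n$ once we check $(d-1)^{t-1} \leq 4^{t-2}$... which is false in general. So the exponent bookkeeping needs adjusting: the intended bound surely comes from counting \emph{unrooted} tree-traversals or from a smarter encoding (e.g.\ Catalan-type bound on the shape of the DFS tree contributing the $4^{t-2}$, and $d(d-1)^{t-2}$ counting the actual embedding of a fixed tree shape into $G$). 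Concretely: the number of rooted plane trees on $t$ vertices is the Catalan number $C_{t-1} \leq 4^{t-1}$, and once the abstract rooted plane tree is fixed, embedding it into $G$ starting from a chosen root amounts to $d (d-1)^{t-2}$ choices (root's first child: $\leq d$ neighbors; every other vertex's children and the continuation: $\leq d-1$ new neighbors each). Multiplying and dividing out the root overcounting gives the stated $4^{t-2} d (d-1)^{t-2} n$.

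**The main obstacle.** The delicate point is the exact constant $4^{t-2}$ and making the injectivity/surjectivity direction of the encoding precise: I need that every connected induced $t$-vertex subgraph has a spanning tree, pick a canonical one (say the DFS tree from the lexicographically-smallest vertex under some fixed vertex order), record its plane-tree shape and its embedding, and argue this assignment is injective into the set (plane tree on $t$ vertices) $\times$ (embedding data). Then $|\{\text{connected }t\text{-sets}\}| \leq C_{t-1} \cdot \frac{1}{t}\cdot\big(\text{embeddings}\big)$ or a variant, where the Catalan bound $C_{t-1} < 4^{t-1}$ supplies the $4^{t-2}$ factor and $nd(d-1)^{t-2}$ counts rooted embeddings of a fixed shape (the extra $1/t$ or root-choice factors must be tracked to land exactly on $4^{t-2}$ rather than $4^{t-1}$). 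I would also verify the base case $t = 2$ directly: connected induced subgraphs on $2$ vertices are exactly edges, of which there are $nd/2 \leq 4^0 \cdot d \cdot (d-1)^0 \cdot n = dn$, consistent. The reference to \cite{NP} suggests this counting lemma has a known clean proof along these lines, so I would model the argument on that.
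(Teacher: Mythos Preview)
Your second approach---count rooted (plane) tree shapes times embeddings of a fixed shape into $G$---is exactly what the paper does. The paper fixes a vertex $v$, observes that every connected $t$-set containing $v$ is the image of an injective homomorphism of some rooted tree on $t$ vertices with root sent to $v$, and bounds the number of such homomorphisms by (\text{number of rooted tree shapes}) $\times\, d(d-1)^{t-2}$, then multiplies by $n$ for the choice of root. No division by $t$ is performed; the stated bound already absorbs that overcount.

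Your only real uncertainty is the constant. You wrote $C_{t-1} \leq 4^{t-1}$ and then worried about losing a factor of $4$; in fact $C_{t-1} = \tfrac{1}{t}\binom{2t-2}{t-1} \leq \tfrac{4^{t-1}}{t} \leq 4^{t-2}$ for $t \geq 4$, and the cases $t=2,3$ ($C_1=1$, $C_2=2$) are checked directly. So your Catalan route lands on $4^{t-2}$ without any root-overcounting trick. The paper obtains the same bound $\leq 4^{t-2}$ by a direct stars-and-bars encoding of ordered rooted trees (BFS-label the tree, record the sequence of child counts, and count compositions), which is just a hands-on proof of the Catalan bound. Your first attempt via closed walks of length $2t-2$ indeed gives the wrong exponent and should be abandoned, as you noticed; the tree-shape-then-embedding decomposition is the right decomposition and matches the paper.
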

\begin{proof}
If $G$ is $d$-regular and $v$ is a vertex of $G$, then there exists a connected induced subgraph on $t$ vertices containing $v$ if and only if there is an injective graph homomorphism from some rooted tree $T$ on $t$ vertices into $G$ mapping the root to $v$. Now if we let $\mathcal{T}(t)$ denote the number of rooted trees on $t$ vertices then the number of connected induced subgraphs of $G$ containing $v$ is at most
\[\mathcal{T}(t) d (d - 1)^{t - 2}. \]
Indeed once we have picked the tree to map, the choice for mapping the root is fixed, we have $d$ choices for the first neighbor of the root that we map to $G$ and then at most $(d - 1)$ choices for each vertex after that. We only now have to bound $\mathcal{T}(t)$.

The question of the number of rooted trees has been extensively studied and $|\mathcal{T}(t)|$ is known to grow exponentially in $\mathcal{T}$; see for example \cite{Otter} where the author shows that for $t$ large enough there is a constant $\alpha \approx 2.95$ so that $|\mathcal{T}(t)| \approx \alpha^t$. In the interest of giving a self-contained proof here and having a bound for all $t$, we show that for $t \geq 2$, $|\mathcal{T}(t)| \leq 4^{t - 2}$.

Observe that every rooted tree of size $t$ may be encoded uniquely by a sequence of $(t - 1)$ nonnegative integers. Given a lexicographic labeling of $T \in \mathcal{T}(t)$, that is the root is labeled with 0, its $k$ neighbors are labeled $1, 2 ..., k$, then the neighbors of vertex 1 labeled consecutively starting at $k + 1$ and so on, we can encode $T$ with a $(t - 1)$-sequence of nonnegative integers $(a_1, ..., a_{t - 1})$ where $a_i$ denotes the number of children of vertex $i - 1$.  Moreover we have $1 + a_1 + \cdots + a_{t - 1} = t$ and since $t \geq 2$, $a_1 > 0$. Thus we bound the number of such sequences. 

By the requirement that the sum of all the entries of our sequence is $t - 1$, we may obtain all sequences via a procedure that takes $t - 1$ vertices arranged in a line, vertical bars at the beginning and the end, and inserts $t - 2$ additional bars anywhere in between two vertices or a vertex and a bar, except that the sequence cannot begin with two consecutive vertical bars. The sequence may be read off by counting the number of points between consecutive bars. Note that some bars may have no vertices between them. The number of outcomes to such a procedure is 
\[\left( \! \binom{t - 1}{t - 2} \! \right) = \binom{t - 1 + t - 2 - 1}{t - 2} < 2^{2t - 4} = 4^{t - 2}\]
where $\left( \! \displaystyle \binom{N}{r} \! \right)$ is multiset coefficient notation, the number of ways to choose a multiset of size $r$ from a set of $N$ elements.
\end{proof}
\begin{theorem}\label{upperbounddeterministic}
For each $d \geq 2$ there exists a constant $C_d$ so that if $P$ is a nondegenerate simple $d$-polytope with $m$ facets and $n$ vertices, then for $p > 1 - 1/(8(d - 1))$ the expected number of facets of $Q \sim \B(P, p)$ is at most 
\[m + nqp^d \left(1 + \frac{C_d q}{1 - 8(d-1)q} \right), \]
where $q = 1 - p$.
\end{theorem}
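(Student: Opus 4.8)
The plan is to separate the facets of $Q \sim \B(P,p)$ into \emph{old} facets (those contained in a facet of $P$) and \emph{new} facets, and bound the expected number of each. Since distinct facets of $P$ span distinct hyperplanes and an old facet $\sigma$ of $Q$, being $(d-1)$-dimensional, spans the same hyperplane as the facet of $P$ containing it, the assignment sending an old facet to that facet of $P$ is injective; hence there are at most $m$ old facets, which supplies the $m$ term. For the new facets I would use Claim \ref{uniquecaps}: a new facet $\sigma$ determines a connected cap $\spcap(\sigma)\subseteq V(P)$, and since $P$ is simple its $1$-skeleton is $d$-regular, so Claim \ref{connectedgraphs} controls the number of caps of each size. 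The bookkeeping fact driving everything is that for a new facet $\sigma$ the sampled vertices lying in $\spcap(\sigma)$ are exactly the vertices of $\sigma$; by nondegeneracy there are at most $d+1$ of them (a set of $d+2$ coplanar vertices of $P$ would lie on a facet), and since $\spcap(\sigma)$ is not the vertex set of any facet of $P$ there is at least one \emph{non}-sampled vertex in $\spcap(\sigma)$. Therefore the expected number of new facets equals $\sum_S \Pr[\spcap(\sigma)=S\text{ for some new facet }\sigma]$ where $S$ ranges over connected subsets of $V(P)$ of size at least $d+1$, and the summand for $|S|=t$ is at most $\binom{t}{d}p^dq^{t-d}+\binom{t}{d+1}p^{d+1}q^{t-d-1}$, always carrying a factor of $q$.

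The crux is extracting the leading term $nqp^d$ from the \emph{minimal} caps, those with $|S|=d+1$. For such a cap $\sigma$ has exactly $d$ vertices $A$ spanning a hyperplane $H$, and $S=A\cup\{v\}$ where $v$ is the unique vertex of $P$ strictly beyond $H$; let $H^+$ denote the closed halfspace bounded by $H$ containing $v$. I would then examine the cap polytope $R:=P\cap H^+$: its vertices are $v$, the vertices of $P$ on $H$, and the points where edges of $P$ cross $H$, and every vertex of $R$ other than $v$ lies on $H$, so $R$ is a pyramid with apex $v$. In a pyramid the apex is joined by an edge to every base vertex, so $\deg_R(v)$ equals the number of base vertices; but in a neighbourhood of $v$ the polytope $R$ coincides with $P$, so $\deg_R(v)=\deg_P(v)=d$, which forces there to be no crossing edges and the only vertices of $P$ on $H$ to be the $d$ vertices of $A$. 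Hence $A=N(v)$, the neighbourhood of $v$ in $P$. Thus every minimal cap has the form $\{v\}\cup N(v)$, there are at most $n$ of them, and such a cap can be realised as a new facet of $Q$ only when $N(v)$ is sampled and $v$ is not, which occurs with probability $p^dq$. Summing gives at most $nqp^d$.

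It remains to bound the caps with $|S|\ge d+2$, and the point is that each carries at least \emph{two} non-sampled vertices, so contributes at order $q^2$. This is immediate when $\sigma$ has $d$ vertices; when $\sigma$ has $d+1$ vertices the same pyramid computation rules out $|S|=d+2$ (it would make $R$ a pyramid over a $(d-1)$-polytope with $d+1$ vertices, forcing $\deg_R(v)=d+1>d$), so $|S|\ge d+3$ and again there are two non-sampled vertices. Bounding the number of connected size-$t$ subgraphs of the $d$-regular $1$-skeleton by $4^{t-2}d(d-1)^{t-2}n$ via Claim \ref{connectedgraphs}, using $\binom{t}{d},\binom{t}{d+1}\le 2^t$, and summing over $t\ge d+2$ produces a geometric series with ratio $8(d-1)q$ (the $8$ is precisely $4\cdot 2$, coming from the $4^{t-2}$ of Claim \ref{connectedgraphs} and the $2^t$ from the binomial coefficients); it converges exactly when $8(d-1)q<1$, that is $p>1-1/(8(d-1))$, and because the summation begins at $t=d+2$ its total is $O_d(nq^2)$ divided by $1-8(d-1)q$. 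Absorbing the dimension-dependent factors (including a bounded $p^{-d}$) into a constant $C_d$ rewrites this as $nqp^d\cdot\frac{C_d q}{1-8(d-1)q}$, and adding the three contributions yields the claimed bound $m+nqp^d\bigl(1+\frac{C_dq}{1-8(d-1)q}\bigr)$.

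The step I expect to be the main obstacle is the pyramid/degree argument for the minimal caps: obtaining the sharp constant $1$ in front of $nqp^d$ --- rather than the far larger $4^{d-1}d(d-1)^{d-1}$ that a naive application of Claim \ref{connectedgraphs} at $t=d+1$ would give --- requires identifying the minimal caps with single-vertex truncations, and it is here that both simplicity and nondegeneracy are used essentially. A minor technical point to dispose of is the event that $Q$ fails to be full-dimensional, which has negligible probability (the dominant such event, that at most $d$ vertices are sampled, has probability at most a polynomial in $n$ times $q^{n-d}$) and contributes only a negligible count of lower-dimensional faces, so it does not affect the estimate.
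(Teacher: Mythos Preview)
Your proof is correct and follows the paper's approach: partition into old and new facets, enumerate new facets by the connected caps $\spcap(\sigma)$ via Claim~\ref{connectedgraphs}, isolate the $t=d+1$ contribution as $nqp^d$, and bound the tail $t\ge d+2$ by a geometric series with ratio $8(d-1)q$. Your pyramid/degree argument for the minimal caps supplies the justification the paper simply asserts in one line (``the number of choices is simply $n$; we choose a vertex and the vertices of its link must define the new facets with that vertex above it''), and your separate treatment of possible $(d{+}1)$-vertex new facets is extra care the paper bypasses by declaring new facets simplicial under nondegeneracy --- so your version is, if anything, more scrupulous on both points, but the architecture is the same.
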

\begin{proof}
For each $t \geq (d + 1)$, we may upper bound the expected number of new hyperplanes $\sigma$ so that $\mathcal{C}(\sigma)$ has exactly $t$ vertices. For $t = (d + 1)$, the expected number of $\sigma$ with $\mathcal{C}(\sigma) = d + 1$ is $nqp^d$. We choose a vertex, and the vertices of its link must define the new facets with that vertex above it. The selected vertex must be excluded from the sample while the vertices of its link must be included. For $t \geq (d + 2)$ fixed, an overestimate for the expected number of hyperplanes $\sigma$ so that $\mathcal{C}(\sigma)$ has exactly $t$ vertices is given by first counting the number of choices of $t$ vertices of the graph of $P$ so that the graph induced by those vertices is connected. Next, since the convex hull of $\mathcal{C}(\sigma)$ is a polytope on $t$ vertices with $\sigma$ as a face, from each choice of $t$ vertices inducing a connected graph we have at most $2^t$ choices for a facet $\sigma$. Once we have chosen the set of $t$ vertices and the facet, which is necessarily a simplicial facet by the nondegeneracy assumption on $P$, we have a probability of at most $q^{t - d}p^d$ that the chosen facet is a new facet of $Q$ with the remaining $t - d$ vertices above it. 

By Claim \ref{connectedgraphs} we have that the expected number of new facets is at most
\begin{eqnarray*}
&& n \left( qp^d + \sum_{t = d + 2}^{\infty} 4^{t - 2} d(d - 1)^{t - 2} 2^t q^{t - d}p^d  \right) \\
 &&\leq n \left(qp^d + \frac{dp^d}{16q^d(d - 1)^2} \sum_{t = d + 2}^{\infty} \left(8(d - 1)q\right)^t \right) \\
 &&\leq n \left(qp^d + 4(8)^d(d - 1)^dq^2 dp^d \sum_{t = 0}^{\infty} \left(8(d - 1)q\right)^t \right) \\
 &&\leq n \left(qp^d + \frac{4(8)^d(d - 1)^dq^2 dp^d }{1 - 8(d - 1)q} \right). \\
\end{eqnarray*}
This gives the upper bound on the number of new facets and trivially $m$ is an upper bound on the number of old facets
\end{proof}

Now as $m \rightarrow \infty$ the number of vertices in $P \sim P_1(d, m)$ tends to $F(d)m$. Therefore we have that the expected number of facets of $Q \sim P_2(d, m, p)$ satisfies the bound in Theorem \ref{upperbound} as a corollary.

We now turn our attention to a lower bound on the expectation. We first give a general lower bound valid for all $p$, and concentration inequality on the expected number of new facets. 
\begin{theorem}\label{shallowcuts}
If $P$ is a simple $d$-polytope with $m$ facets and at least $n$ vertices then for every $p \in [0, 1]$, the expected number of facets of $Q \sim \B(P, p)$ is at least
\[p^{d}m + nqp^d,\]
where $q = 1 - p$. And moreover for any $\epsilon > 0$ the probability that the number of facets is at least $(1 - \epsilon)(p^{d }m + nqp^d)$ is at least 
\[1 - e^{-2 (\epsilon qp^d/d)^2 n} - e^{-(m p^{d} \epsilon^2)/2}.\]
\end{theorem}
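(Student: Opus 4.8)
The plan is to lower bound the number of facets of $Q$ by exhibiting two explicit and disjoint families of facets, an ``old'' family indexed by the facets of $P$ and a ``new'' family indexed by the vertices of $P$, each member of which lies on $Q$ with an easily computed probability, and then to run a bounded-differences/Chernoff argument on the two families separately. Here $S$ denotes the random set of sampled vertices, and throughout I work on the overwhelmingly likely event that $Q$ is full-dimensional (which is also implicit in the statement being of interest). Call a facet of $Q$ \emph{old} if it lies on a hyperplane supporting $P$ and \emph{new} otherwise; these agree with the notions in the text.

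For the new family, fix a vertex $v$ of $P$ with its $d$ neighbours $w_1,\dots,w_d$ (there are exactly $d$ since $P$ is simple). The key geometric input is that the hyperplane $H_v=\mathrm{aff}(w_1,\dots,w_d)$ strictly separates $v$ from every other vertex of $P$. To see this, change coordinates so that $v=0$ and the $d$ facets of $P$ through $v$ lie on the coordinate hyperplanes $\{x_i=0\}$; then $w_i=a_ie_i$ with $a_i>0$, every vertex of $P$ has all coordinates nonnegative, and one checks that the only vertices $u$ with $\sum_i u_i/a_i\le 1$ are $v$ (value $0$) and the $w_i$ (value $1$) --- any other such $u$ would lie in the relative interior of a proper face of the corner simplex $\mathrm{conv}(v,w_1,\dots,w_d)\subseteq P$, hence in the relative interior of a proper face of $P$, contradicting that $u$ is a vertex. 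Granting this: whenever $v\notin S$ but $w_1,\dots,w_d\in S$ (probability $qp^d$), the hyperplane $H_v$ supports $Q\subseteq\mathrm{conv}(V(P)\setminus\{v\})\subseteq\{x:\sum_i x_i/a_i\ge 1\}$, and $Q\cap H_v=\mathrm{conv}(w_1,\dots,w_d)$ is a $(d-1)$-simplex facet of $Q$. Since $H_v$ is not a supporting hyperplane of $P$ (some vertex of $P$ is strictly on the $>1$ side) this facet is genuinely new, hence disjoint from the old family below; and distinct vertices $v$ give distinct $H_v$, hence distinct facets --- here one uses that in a simple $d$-polytope with more than $d+2$ vertices no two vertices share all their neighbours, the few small exceptional polytopes being checked by hand.

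For the old family, each facet $\tau$ of $P$ is itself a $(d-1)$-polytope, so it contains $d$ affinely independent vertices; fix such a set $D_\tau\subseteq V(\tau)$. If $D_\tau\subseteq S$ (probability exactly $p^d$) then $\mathrm{aff}(\tau)$ still supports $Q$ and meets it in a $(d-1)$-dimensional set, so $Q$ has a facet on $\mathrm{aff}(\tau)$; distinct facets of $P$ give distinct hyperplanes, hence distinct facets of $Q$, all lying on supporting hyperplanes of $P$ and so disjoint from the new family. Setting $X_{\mathrm{old}}=\sum_{\tau}\mathbf 1[D_\tau\subseteq S]$ and $X_{\mathrm{new}}=\sum_{v}\mathbf 1[v\notin S,\;N(v)\subseteq S]$ (where $N(v)$ is the neighbour set of $v$), the number of facets of $Q$ is at least $X_{\mathrm{old}}+X_{\mathrm{new}}$, and taking expectations gives $\E[X_{\mathrm{old}}]+\E[X_{\mathrm{new}}]=m p^d+|V(P)|\,qp^d\ge p^dm+nqp^d$, proving the first assertion.

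For the concentration, since the facet count is at least $X_{\mathrm{old}}+X_{\mathrm{new}}$ it suffices to show $X_{\mathrm{old}}\ge(1-\epsilon)mp^d$ and $X_{\mathrm{new}}\ge(1-\epsilon)nqp^d$ each with the claimed failure probability and apply a union bound. Both quantities are functions of the independent bits $(\mathbf 1[u\in S])_u$, and since $P$ is simple each vertex lies on exactly $d$ facets, from which one checks that flipping a single bit changes each of $X_{\mathrm{new}},X_{\mathrm{old}}$ by at most $d$; a bounded-differences (McDiarmid) inequality, using $\E[X_{\mathrm{new}}]=|V(P)|qp^d$ and $|V(P)|\ge n$, then gives $\Pr[X_{\mathrm{new}}<(1-\epsilon)nqp^d]\le\exp(-2(\epsilon qp^d/d)^2 n)$. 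For $X_{\mathrm{old}}$ one wants instead the Chernoff-type bound $\exp(-mp^d\epsilon^2/2)$, the bound one would get if the $m$ indicators $\mathbf 1[D_\tau\subseteq S]$, each of probability exactly $p^d$, were independent; extracting this sharper exponent from the genuine but sparse dependence among them (each vertex meets only $d$ facets) is the one delicate point of the concentration step. The main obstacle overall, though, is the geometric lemma behind the new family --- that deleting one simple vertex of $P$ and taking the convex hull always reveals exactly one simplex facet, and that this facet survives even when many other vertices of $P$ are deleted simultaneously --- together with the bookkeeping guaranteeing that all facets produced by the two families are pairwise distinct; once these are in place, the expectation bound is immediate and the concentration follows from standard inequalities.
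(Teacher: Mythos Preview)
Your argument follows the paper's line almost exactly: old facets are lower-bounded by the event that a fixed $d$-subset $D_\tau\subseteq V(\tau)$ survives, new facets by the ``shallow cuts'' (the simplex on the neighbours of a deleted simple vertex), and concentration for the shallow-cut count comes from McDiarmid with Lipschitz constant~$d$. You supply considerably more geometric detail than the paper does for why the shallow cut at $v$ is genuinely a facet of $Q$ and why distinct $v$ give distinct facets; the paper simply asserts that $\mathrm{conv}(w_1,\dots,w_d)$ is a facet of $Q$ whenever $v\notin S$ and all $w_i\in S$, without the separation argument you provide.

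The one point you explicitly leave open --- the Chernoff-scale tail $\exp(-mp^{d}\epsilon^2/2)$ for $X_{\mathrm{old}}$ --- is exactly where the paper departs from what you sketch. Rather than trying to extract a Chernoff bound from the dependent family $\{\mathbf 1[D_\tau\subseteq S]\}_\tau$ via their sparse overlap structure, the paper argues that once a $d$-set $D_\tau$ has been fixed in each facet, the number of old facets of $Q$ stochastically dominates a $\Bin(m,p^{d})$ random variable, and then quotes the standard binomial lower-tail bound directly. So the ingredient missing from your write-up, relative to the paper, is this stochastic-domination step rather than any refined dependent-events Chernoff argument.
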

\begin{proof}
Since $P$ is a simple polytope, the link of every vertex in the 1-skeleton is a collection of $d$ vertices. For a fixed vertex $v$, and $\{w_1, ..., w_d\}$ its neighbors, the facet given by the convex hull of $\{w_1, ..., w_d\}$ will be included in $Q$ if every vertex $w_1, ... , w_d$ is included in the sample and $v$ is excluded from the sample. Call such a facet a shallow cut. The expected number of shallow cuts in $Q$ is clearly at least $nqp^d$, and this is a lower bound for the number of new facets in $Q$. 

For the concentration of measure statement we apply McDiarmid's inequality from \cite{McDiarmid}
\begin{theorem}[\hspace{1sp}\cite{McDiarmid}]
Let $X_1, ..., X_n$ be random $\{0, 1\}$-valued random variables. If $f$ is a real-valued function on $X_1, ..., X_n$ so that for every $i$ there exists $c_i$ so that for all $(X_1, .., X_n)$
\[|f(X_1, ..., X_i, ..., X_n) - f(X_1, ..., 1 - X_i, ..., X_n)| \leq c_i.\]
Then for every $t > 0$
\[\Pr(\E(f(X_1, ..., X_n)) - f(X_1, ..., X_n) \geq t) \leq \exp\left(-2t^2/\sum_{i = 1}^n c_i^2\right) .\]
\end{theorem}
In our case the $X_i$'s are the indicator random variables for the vertices of $P$ sampled for $Q$, and $f(X_1, ... ,X_n)$ counts the number of shallow cuts. Thus we have that for any $i$, 
\[|f(X_1, ..., X_i, ... X_n)  - f(X_1, ...., 1 - X_i, ..., X_n)| \leq d.\]
Indeed swapping a vertex into or out of the sample only affects the shallow cuts involving adjacent vertices. Thus for $\epsilon > 0$ fixed the probability that the number of facets is at most $(1 - \epsilon) nqp^d$ is at most the probability that the number of shallow cuts is at most $(1 - \epsilon) n q p^d$ which is bounded above by
\[\Pr(\E(f(X_1, ..., X_n)) - f(X_1, ..., X_n)  \geq \epsilon n qp^d). \]
By McDiarmid's inequality this is at most
\[\exp\left(-2 \epsilon^2 q^2 p^{2d} n / d^2\right), \]
so this is an upper bound on the probability that there are fewer than $(1 - \epsilon)p^dqn$ new facets.

Next we find a lower bound on the number of old facets. If $P$ is a simple polytope with $m$ facets and we sample $Q \sim \B(P, p)$, then each facet of $P$ has at least $d$ vertices and so it contributes an old facet to $Q$ with probability at least $p^{d}$. So the expected number of old facets is at least $p^{d}m$. Moreover, if we select $d$ vertices from each facet of $P$ ahead of taking the sample, then we have that the number of old facets in $Q$ stochastically dominates the binomial random variable $\Bin(m, p^{d})$, so we have that for any $\epsilon > 0$ the probability that $Q$ has fewer than $(1 - \epsilon) m p^{d}$ facets is at most
\[\exp( - (mp^{d} \epsilon^2)/2). \]
The concentration inequality on the total number of facets now follows.
\end{proof}

To prove Theorem \ref{lowerbound}, we also need a concentration inequality on the number of facets of $P \sim P_1(d, m)$. There are already concentration inequalities on $f$-vector entries for random polytopes sampled as the convex hull of points in the \emph{interior} of a fixed convex body \cite{Reitzner2005}, however the concentration inequality we prove here for points sampled from the boundary of a ball is apparently new.  Our concentration inequality is obtained via the \emph{Efron--Stein jackknife inequality} first described in \cite{EfronStein}. The Efron--Stein inequality is applied by Reitzner \cite{Reitzner2003} to study concentration of volumes for polytopes obtained by taking the convex hull of points from the interior or from the boundary of some fixed convex body $K$. In the context of random polytopes one has that if $K$ is a convex body and one builds a sequence of polytopes $K_1, K_2, ...., K_m$ where $K_1$ is a single point sampled according to some distribution $\mu$ on $K$ and $K_i$ is sampled by taking the convex hull of $K_{i  - 1}$ and a new point sampled by $\mu$, then for a functional $f$ of the random polytope process the Efron--Stein jackknife inequality is that
\[\Var f(K_m) \leq (m + 1) \E((f(K_{m+1}) - f(K_m))^2).\]
Thus one can bound the variance by understanding how the polytope changes when a single point is added. A result of Reitzner that is used in \cite{Reitzner2003} to study the volume of a random polytope is well suited here to prove the concentration inequality we will need.
\begin{theorem}[Special case of Theorem 10 of \cite{Reitzner2003}]\label{ReitznerLemma}
Let $X_1, ..., X_m, X$ be points chosen uniformly on the unit sphere in $\R^d$, and let $S_m(X)$ be the random variable counting the number of facets of the convex hull of $X_1, ..., X_m$ which are no longer facets in the convex hull of $X_1, .., X_m, X$.
 Then there exists a constants $C_d$ and $c_d$ so that
\[\lim_{m \rightarrow \infty} \E(S_m(X)) = C_d,\]
and 
\[\lim_{m \rightarrow \infty} \E(S_m(X)^2) = c_d. \]
\end{theorem}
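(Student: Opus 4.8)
The plan is to obtain Theorem~\ref{ReitznerLemma} as the specialization of Reitzner's general Theorem~10 of \cite{Reitzner2003} to the convex body $K=B^d$ equipped with the uniform probability measure on its boundary $S^{d-1}$; since $S^{d-1}$ is a $C^2$ hypersurface with everywhere positive Gauss curvature, the regularity hypotheses under which Reitzner proves existence of the limiting moments of the ``number of discarded facets'' are satisfied, and the two displayed limits follow. To make the argument self-contained I would carry out the underlying integral-geometric computation directly. Write $P_m=\mathrm{conv}(X_1,\dots,X_m)$, which is simplicial with probability $1$, so every facet corresponds to a $d$-subset $I\subseteq[m]$ with $\mathrm{aff}(X_I)$ supporting $P_m$, and such a facet $X_I$ is discarded precisely when $X$ lies strictly beyond it. Hence
\[
S_m(X)=\sum_{|I|=d}\mathbf 1[X_I\text{ is a facet}]\,\mathbf 1[X\text{ beyond }X_I],
\]
and by exchangeability $\E(S_m(X))=\binom{m}{d}\,\E\!\big[\alpha^{m-d}\beta+\beta^{m-d}\alpha\big]$, where $\alpha,\beta$ (with $\alpha+\beta=1$) are the surface measures of the two spherical caps cut off from $S^{d-1}$ by the hyperplane through $d$ independent uniform points.

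The asymptotics of this expectation are governed by a standard localization: the factor $\beta(1-\beta)^{m-d}$ (and its mirror image) forces the relevant configurations to have $\min(\alpha,\beta)=\Theta(1/m)$, i.e.\ the $d$ spanning points concentrated in a region of $S^{d-1}$ of diameter $\Theta(m^{-1/(d-1)})$. Rescaling the configuration about a fixed point of the sphere and flattening $S^{d-1}$ to its tangent hyperplane, the integrand converges pointwise and is dominated by an integrable function of the rescaled configuration, so dominated convergence yields $\binom{m}{d}\E[\,\cdot\,]\to C_d$ with $C_d$ an explicit integral over $d$ rescaled points. For the second moment I would expand
\[
\E(S_m(X)^2)=\E(S_m(X))+\sum_{\substack{I\neq J\\ |I|=|J|=d}}\Pr[X_I,X_J\text{ facets, }X\text{ beyond both}],
\]
and split the double sum according to $j=|I\cap J|\in\{0,1,\dots,d-1\}$; for each $j$ the number of pairs is $\Theta(m^{2d-j})$, and the same localization (now with two caps, both of size $\Theta(1/m)$, and $X$ on the far side of each) shows every term converges to a finite constant, the dominant contribution coming from adjacent facets $j=d-1$. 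Summing over $j$ and adding the limit $C_d$ of the diagonal term gives $\E(S_m(X)^2)\to c_d$.

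The main obstacle is the rigorous bookkeeping in the localization step: producing uniform dominating functions for the rescaled integrands, controlling the curvature corrections incurred when $S^{d-1}$ is replaced by its tangent plane, and---for the second moment---ruling out a non-negligible contribution from ``long-range'' pairs of facets whose caps are not both of size $\Theta(1/m)$ (one checks that a single point $X$ cannot then lie beyond both facets, so such configurations are exponentially suppressed). All of this is precisely the content of Reitzner's proof for general smooth $K$, which is why invoking \cite[Theorem~10]{Reitzner2003} is the most economical route. As a consistency check, Theorem~\ref{BMTtheorem} forces $\E(f_{d-1}(P_{m+1}))-\E(f_{d-1}(P_m))\to F(d)$; since this difference equals $\E(N_m(X))-\E(S_m(X))$, where $N_m(X)$ counts the newly created facets, the limit $C_d$ of $\E(S_m(X))$ must satisfy $\lim_{m\to\infty}\E(N_m(X))=C_d+F(d)$.
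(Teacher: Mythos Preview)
The paper does not prove Theorem~\ref{ReitznerLemma} at all: it is stated purely as a citation, a special case of Theorem~10 of \cite{Reitzner2003}, and is then used as a black box in the proof of Theorem~\ref{vertexconcentration}. Your proposal correctly identifies that invoking \cite[Theorem~10]{Reitzner2003} is the intended route, so in that sense your approach matches the paper's exactly. The integral-geometric localization sketch you give for a self-contained argument goes well beyond what the paper does; it is a reasonable outline of what underlies Reitzner's proof, but none of it is needed here since the paper is content to quote the result.
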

With the result we will prove the following about the concentration of the number of facets of $P \sim P_1(d, m)$.
\begin{theorem}\label{vertexconcentration}
For $\epsilon > 0$ fixed the probability that the number of facets $f_{d - 1}(P)$ for $P \sim P_1(d, m)$ satisfies $|f_{d - 1}(P) - F(d)m| \geq \epsilon F(d) m$ is $O \left(1/m\right)$.
\end{theorem}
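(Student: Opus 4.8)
\subsection*{Proof proposal}

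The plan is to combine a linear bound on the variance, obtained from the Efron--Stein jackknife inequality, with Chebyshev's inequality. By Theorem \ref{BMTtheorem} and the definition of $F(d)$ we have $\E(f_{d-1}(P)) = F(d)m(1+o(1))$, so it suffices to show $\Var(f_{d-1}(P)) = O(m)$. Granting this, for $m$ large enough $|\E(f_{d-1}(P)) - F(d)m| \leq \tfrac12 \epsilon F(d) m$, hence
\[
\Pr\!\big(|f_{d-1}(P) - F(d)m| \geq \epsilon F(d)m\big) \;\leq\; \Pr\!\big(|f_{d-1}(P) - \E f_{d-1}(P)| \geq \tfrac12\epsilon F(d)m\big) \;\leq\; \frac{\Var(f_{d-1}(P))}{(\epsilon F(d)m/2)^2} \;=\; O(1/m).
\]

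To bound the variance, realize $P \sim P_1(d,m)$ via the sequential process $K_i = \mathrm{conv}(X_1,\dots,X_i)$ with $X_1,X_2,\dots$ i.i.d.\ uniform on the unit sphere, so $K_m = P$, and apply the Efron--Stein jackknife inequality in the form quoted above:
\[
\Var f_{d-1}(K_m) \;\leq\; (m+1)\,\E\big(f_{d-1}(K_{m+1}) - f_{d-1}(K_m)\big)^2 .
\]
With probability $1$ the sampled points are in general position, so for $m$ large $K_m$ is a simplicial $d$-polytope and the fresh point $X = X_{m+1}$ lies strictly outside $K_m$. Passing from $K_m$ to $K_{m+1}$ deletes exactly the facets of $K_m$ visible from $X$, of which there are $S_m(X)$ in the notation of Theorem \ref{ReitznerLemma}, and (beneath--beyond) creates one new facet $\mathrm{conv}(X,G)$ for each horizon ridge $G$, i.e.\ each ridge of $K_m$ shared between a visible and an invisible facet. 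Every horizon ridge is contained in a visible facet, and each visible facet is a $(d-1)$-simplex with exactly $d$ ridges, so the number of new facets is at most $d\,S_m(X)$. Combining deletions and insertions, $|f_{d-1}(K_{m+1}) - f_{d-1}(K_m)| \leq d\,S_m(X)$, whence
\[
\Var f_{d-1}(K_m) \;\leq\; (m+1)\,d^2\,\E\big(S_m(X)^2\big).
\]
By Theorem \ref{ReitznerLemma}, $\E(S_m(X)^2) \to c_d$ and is therefore bounded in $m$, giving $\Var f_{d-1}(P) = O(m)$ and completing the argument.

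The only genuine content is the deterministic facet-exchange estimate $|f_{d-1}(K_{m+1}) - f_{d-1}(K_m)| \leq d\,S_m(X)$, and I expect the care to go into the beneath--beyond bookkeeping — that the newly created facets are in bijection with the horizon ridges, each of which lies on a deleted (visible) facet — together with the routine almost-sure general-position checks ensuring $K_m$ is simplicial and $X \notin K_m$. Everything else is Chebyshev's inequality plus the two previously stated results (Theorems \ref{BMTtheorem} and \ref{ReitznerLemma}) and the cited Efron--Stein jackknife inequality.
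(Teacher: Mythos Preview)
Your proposal is correct and follows essentially the same route as the paper: Efron--Stein on the sequential construction, the deterministic beneath--beyond estimate $|f_{d-1}(K_{m+1})-f_{d-1}(K_m)|\le c\,S_m(X)$, Theorem~\ref{ReitznerLemma} to bound $\E S_m(X)^2$, and Chebyshev. The only cosmetic differences are that you get the constant $d$ where the paper uses $d+1$, and you are slightly more careful than the paper in separating $\E f_{d-1}(P)$ from $F(d)m$ before invoking Chebyshev.
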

\begin{proof}
Consider a sequence of polytopes $K_1, ..., K_m, ...$ where $K_1$ is a single point selected uniformly at random from the unit sphere in $\R^d$ and $K_i$ is obtained as the convex hull of $K_{i - 1}$ and a new point selected uniformly at random from the unit sphere in $\R^d$. By the Efron--Stein inequality we have that
\[\Var f_{d - 1}(K_m) \leq (m + 1) \E((f_{d - 1}(K_{m+1}) - f_{d - 1}(K_m))^2).\]
Now $|f_{d - 1}(K_{m + 1}) - f_{d - 1}(K_m)|$ is at most $S_m(X)(d + 1)$. This can be seen because if $K_m$ is already constructed then the number of facets erased by sampling a new point is exactly $S_m(X)$ by definition, however sampling a new point will also create new facets. But the number of new facets is controlled by $S_m(X)$ as well. Every new facet will come from the cone of a face of dimension $(d - 2)$ of some facet of $S_m(X)$ and cone point given by the new vertex. As every polytope $K_1, ..., K_m, ...$ will be simplicial with probability $1$ there are at most $dS_m(X)$ new facets added. Thus 
\[\E((f_{d - 1}(K_{m+1}) - f_{d - 1}(K_m))^2) \leq \E(((d + 1)S_m(X))^2), \]
and this tends to $(d + 1)^2 c_d$ by Theorem \ref{ReitznerLemma}. It follows that in the limit 
\[ \Var f_{d - 1}(K_m) \leq (d + 1)^2(m + 1) c_d .\]
Now as $\E(f_{d - 1}(K_m)) \rightarrow F(d) m$ we have by Chebyshev's inequality that 
\[ \Pr(|f_{d - 1}(K_m) - F(d) m| \geq \epsilon F(d)m ) \leq \frac{\Var(f_{d - 1}(K_m))}{\epsilon^2 (\E(f_{d - 1}(K_m)))^2} \rightarrow  \frac{(d + 1)^2(m + 1) c_d}{\epsilon^2 F(d)^2 m^2} = O \left(\frac{1}{m}\right). \]
\end{proof}

\begin{proof}[Proof of Theorem \ref{lowerbound}]
Let $\delta > 0$ be given and suppose $P \sim P_1(d, m)$. By Theorem \ref{vertexconcentration}, the probability that $P$ has fewer than $(1 - \delta)F(d)m$ facets as $m$ tends to infinity is $O(1/m)$. If $P$ has at least $(1 - \delta)F(d)m$ facets then $P^{\circ}$ has at least $(1 - \delta)F(d)m$ vertices and $m$ facets so we are in the situation to apply Theorem \ref{shallowcuts} with $n = (1 - \delta)F(d)m$ and in this case with probability $\exp(-\Omega(m))$ $Q \sim \B(P^{\circ}, p)$ has fewer than 
\[\left(p^{d} + (1 - \delta) qp^d F(d) \right)m \]
facets. So for $Q \sim P_2(d, m, p)$ the probability that $Q$ has fewer than $\left(p^{d } + (1 - \delta)qp^d F(d)\right) m$ facets is $O(1/m) + \exp(-\Omega(m)) = o(1)$. 
\end{proof}

\begin{remark}
The concentration statements for the lower bounds are convenient to have for the purpose of constructing slender families of polytopes. If we take fixed $q < 1/(8(d - 1))$, then Theorem \ref{upperbound} gives us a constant $C = C(d, p)$ with $p = 1 - q$ so that asymptotically in $m$ the expected number of facets of $Q \sim P_2(d, m, p)$ is at most $C(d, p) m$, so by Markov's inequality there is a positive probability that such a random polytope has at most $(1 + \delta)C(d, p)m$ facets for any fixed $\delta > 0$. On the other hand Theorem \ref{lowerbound} gives us a constant $c = c(d, p)$ \emph{and} a concentration inequality so that we can say that with high probability $Q \sim P_2(d, m, p)$ has at least $(1 - \delta)c(d, p)m$ facets. The number of vertices of $Q \sim P_2(d, m, p)$ is concentrated around $p F(d) m$ by Theorem \ref{vertexconcentration}. For $\delta > 0$ and $m$ sufficiently large then $Q \sim P_2(d, m, p)$ satisfies the following:
\begin{enumerate}
\item with high probability $Q$ has between $(1 - \delta) pF(d) m$ and $(1 + \delta) pF(d)$ vertices.
\item with high probability $Q$ has at least $(1 - \delta) c m$ facets, and
\item with positive probability $Q$ has at most $(1 + \delta) C m$ facets.
\end{enumerate}
So with positive probability, for $\delta > 0$ and $m$ large enough, $Q \sim P_2(d, m, p)$ satisfies
\[\frac{(1 - \delta) c f_0(Q)}{(1 + \delta) p F(d)} < f_{d - 1}(Q) < \frac{(1 + \delta) C f_0(Q)}{(1 - \delta) p F(d)}.\]

\end{remark}

\section{Approximation of the sphere}

Here we estimate the Hausdorff distance between $Q \sim P_2(d, m, p)$ and the unit sphere. We will bound the Hausdorff distance to the unit sphere $S^{d - 1}$ by showing that the boundary of $Q$ lives outside a sphere centered at the origin of radius $1 - \epsilon$ and inside a sphere centered at the origin of radius $1 + \epsilon$, where we will have $\epsilon$ depend on $m$ and $p$ and keep the assumption that $d$ is fixed. 

The first step toward bounding the Hausdorff distance for $Q$ is to bound how far away the vertices of $Q$ are from the origin. 
\begin{lemma}\label{HausdorffOutside}
For $\epsilon > 0$ bounded above by a sufficiently small constant and $Q \sim P_2(d, m, p)$ with probability at least 
\[1 - p\binom{m}{d} \left(1 - \frac{\sqrt{\epsilon}^{d - 1}v_{d - 1}}{2dv_d} \right)^{m - d}\]
 every vertex of $Q$ is at distance at most $(1 + \epsilon)$ from the origin where $v_k$ denotes the Lebesgue measure of the unit ball in $\R^k$.
\end{lemma}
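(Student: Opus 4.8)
The plan is to use polar duality to convert ``a vertex of $Q$ is far from the origin'' into ``a facet of $P$ has its affine hull close to the origin,'' and to show that such facets are unlikely because each one forces a spherical cap of normalized measure $\gtrsim\sqrt{2\epsilon}^{d-1}$ to contain none of the sample points.

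Write $r=1/(1+\epsilon)$ and let $x_1,\dots,x_m$ be the sample points on $S^{d-1}$ defining $P\sim P_1(d,m)$. Almost surely $P$ is simplicial with vertices in general position, so every facet $F$ of $P$ is the convex hull of $d$ of the $x_i$ and corresponds to the vertex $u_F$ of $P^\circ$ determined by $\langle u_F,x\rangle=1$ for $x\in F$; moreover $\|u_F\|=1/\dist(0,\mathrm{aff}(F))$, and $u_F$ actually lies in $P^\circ$ exactly when the origin is strictly on the inner side of $\mathrm{aff}(F)$ (if the origin is on the outer side, the candidate dual point fails $\langle u_F,x\rangle\le 1$ on $P$). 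Every vertex of $Q$ is a vertex of $P^\circ$ retained by the $\B(\cdot,p)$ step, since any vertex of a polytope stays a vertex of the convex hull of a subset of vertices containing it. Hence the event that some vertex of $Q$ has norm exceeding $1+\epsilon$ is contained in the event that for some $d$-subset $T=\{x_{i_1},\dots,x_{i_d}\}$ of the sample, $T$ spans a facet $F$ of $P$ with the origin on its inner side, $\dist(0,\mathrm{aff}(F))<r$, and $u_F$ retained.

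Next I would union bound over the $\binom md$ choices of $T$. For fixed $T$, retention of $u_F$ is an independent coin of probability $p$, so $T$ contributes $p$ times the probability that $T$ spans such a facet. Condition on the positions of the points of $T$; this fixes the hyperplane $H=\mathrm{aff}(x_{i_1},\dots,x_{i_d})$ and the number $\dist(0,H)$. If $\dist(0,H)\ge r$ there is no contribution. If $\dist(0,H)<r$, then for $T$ to span a facet with the origin on the inner side, all $m-d$ remaining points must lie on the same side of $H$ as the origin, i.e.\ they all avoid the spherical cap $C\subseteq S^{d-1}$ cut off on the far side of $H$. This cap contains the cap of height $1-r$, whose base $(d-1)$-disk has radius $\sqrt{1-r^2}$; since orthogonal projection to the equatorial hyperplane is area-nonincreasing, the surface area of $C$ is at least $v_{d-1}(1-r^2)^{(d-1)/2}$, and dividing by $dv_d$ (the surface area of $S^{d-1}$) and bounding $1-r^2$ from below in terms of $\epsilon$ shows that the normalized measure of $C$ is at least $\sqrt{2\epsilon}^{d-1}v_{d-1}/(dv_d)$. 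As the remaining $m-d$ points are i.i.d.\ uniform on $S^{d-1}$, the conditional probability they all avoid $C$ is at most $\bigl(1-\sqrt{2\epsilon}^{d-1}v_{d-1}/(dv_d)\bigr)^{m-d}$, uniformly in the conditioning. Multiplying by $p$, summing over the $\binom md$ subsets, and complementing yields the bound.

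The one genuine computation — and the main obstacle — is the spherical-cap measure estimate: bounding from below the area of the cap a hyperplane at distance $r=1/(1+\epsilon)$ from the centre of $S^{d-1}$ slices off, so that the constant $\sqrt{2\epsilon}^{d-1}v_{d-1}/(dv_d)$ emerges; the equatorial-projection bound above is the clean route and is the only place that constant is used. A secondary point, more bookkeeping than difficulty, is that only facets of $P$ whose supporting hyperplane keeps the origin on its inner side produce finite vertices of $P^\circ$; this is exactly what pins the empty cap in the previous paragraph to be the small far-side cap rather than the large one, so a single factor $\bigl(1-\sqrt{2\epsilon}^{d-1}v_{d-1}/(dv_d)\bigr)^{m-d}$ suffices and no separate term for the (exponentially unlikely) event that the origin leaves $P$ is needed. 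Everything else — simpliciality of $P$, the formula $\|u_F\|=1/\dist(0,\mathrm{aff}(F))$, and independence of the two randomisation steps — is routine.
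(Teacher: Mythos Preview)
Your argument matches the paper's: duality turns a far vertex of $Q$ into a facet of $P$ whose supporting hyperplane lies within $1/(1+\epsilon)$ of the origin, such a facet forces the remaining $m-d$ sample points to miss a spherical cap whose normalized measure you lower-bound (via equatorial projection, as the paper does implicitly) by $\sqrt{2\epsilon}^{\,d-1}v_{d-1}/(dv_d)$, and a union bound over the $\binom{m}{d}$ subsets together with the independent factor $p$ from the retention step yields the stated probability. You are in fact slightly more careful than the paper in noting that only facets with the origin on their inner side yield vertices of $P^{\circ}$, which pins down the far-side cap as the one that must be empty.
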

\begin{proof}
Let $P \sim P_1(d, m)$ and $Q \sim \B(P^{\circ}, p)$. Let $X_Q$ be the random variable counting the number of vertices of $Q$ at distance at least $(1 + \epsilon)$ from the origin. Since the sample of vertices of $P^{\circ}$ selected to define $Q$ are chosen uniformly and independently of the distance of those points from the origin, $\E(X_Q) = p\E(X_{P^{\circ}})$ where $X_{P^{\circ}}$ is the number of vertices of $P^{\circ}$ at distance at least $1 + \epsilon$ from the origin. Each vertex of $P^{\circ}$ at distance at least $(1 + \epsilon)$ from the origin is dual to a hyperplane of $P$ at distance at most $\frac{1}{1 + \epsilon}$ from the origin which induces a facet. The hyperplane defining such a facet therefore cuts off a cap of height at least $1 - \frac{1}{1 + \epsilon} = \frac{\epsilon}{1 + \epsilon}$. Moreover this cap can contain no vertices of $P$.

The probability a uniform random point of the unit sphere in $\R^d$ lives on some fixed cap of height $\frac{\epsilon}{1 + \epsilon}$ is at least
\[\frac{\sqrt{\epsilon}^{d - 1} v_{d - 1}}{2dv_d}.\]
Thus the expected number of facets of $P$ whose defining hyperplane is at distance at most $\frac{1}{1 + \epsilon}$ from the origin is by linearity of expectation at most
\[\binom{m}{d} \left(1 - \frac{\sqrt{\epsilon}^{d - 1}v_{d - 1}}{2dv_d} \right)^{m - d}.\]
And from this estimate and Markov's inequality the bound on the probability follows.
\end{proof}

The second piece of the argument is to establish how close to the origin the facets of $Q$ may be. The argument for this part proceeds in essentially two steps, the first is to show that $P^{\circ}$ for $P \sim P_1(d, m)$ has that all its facets have metric diameter $O((\log m/m)^{1/(d - 1)})$ with high probability. Then we show that conditioned on this metric diameter bound for $P^{\circ}$, with high probability $Q \sim \B(P^{\circ}, p)$ has that every new facet has $O(\log m)$ vertices of $P^{\circ}$ above it. When this holds, the distance from any point of the unit sphere above a hyperplane determining a new facet to that facet must be at most the length of a longest path from a point of $P^{\circ}$ above the hyperplane to a vertex of the new facet. Since every edge has Euclidean length $O((\log m/m)^{1/(d - 1)})$ and any such path has $O(\log m)$ edges, the distance from the origin to the new facet may not be smaller than $1 - O([(\log m) (\log m/m)^{1/(d - 1)}]^2)$. This argument is made precise by the following series of claims.

In the first claim we deal with the diameter of a facet. For polytope diameter has a few different meanings, here we mean the metric diameter; i.e. For a polytope $P$ we use ``diameter" to mean the supremum distance under the Euclidean metric between any two points of $P$.

\begin{claim}\label{triangle}
If $P$ is a polytope circumscribed around the unit sphere in $\R^d$ with every vertex of $P$ at distance at most $(1 + \epsilon)$ from the origin for $\epsilon > 0$, then the diameter of every facet of $P$ is at most $\sqrt{12 \epsilon + 6 \epsilon^2}$.
\end{claim}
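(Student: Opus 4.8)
The plan is to reduce the claim to a one-line two-dimensional estimate inside the tangent hyperplane of the facet. First I would record the elementary fact that, since a facet $F$ of $P$ is itself a convex polytope, the Euclidean diameter of $F$ is attained between two of its vertices (the function $x \mapsto |x-y|$ is convex, so its maximum over $F$ is at a vertex, and then one maximizes over $y$ in the same way). Since vertices of $F$ are vertices of $P$, they lie at distance at most $1+\epsilon$ from the origin by hypothesis. So it suffices to bound $|v-w|$ for two vertices $v,w$ of $F$.

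Next I would use the circumscription hypothesis: because $P$ is circumscribed around the unit sphere in $\R^d$, the affine hyperplane $H$ spanned by $F$ is tangent to that sphere. Let $t$ be the point of tangency, so $|t|=1$ and $t$ is orthogonal to every direction lying in $H$. For any $v \in F \subseteq H$ we then have $v-t \perp t$, so the Pythagorean theorem gives $|v-t|^2 = |v|^2 - 1 \le (1+\epsilon)^2 - 1 = 2\epsilon + \epsilon^2$, and likewise for $w$.

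Finally, the triangle inequality gives $|v-w| \le |v-t| + |t-w| \le 2\sqrt{2\epsilon + \epsilon^2}$, and since $4(2\epsilon + \epsilon^2) = 8\epsilon + 4\epsilon^2 \le 12\epsilon + 6\epsilon^2$ for $\epsilon \ge 0$, we conclude $\diam(F) \le \sqrt{12\epsilon + 6\epsilon^2}$ (indeed the argument yields the slightly stronger bound $2\sqrt{2\epsilon+\epsilon^2}$, so there is room to spare).

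There is essentially no obstacle to carrying this out. The only two points worth stating carefully are that ``circumscribed around the unit sphere'' forces each facet hyperplane to be at distance exactly $1$ from the origin, so that Pythagoras applies cleanly with the tangency point $t$ as the foot of the perpendicular, and the fact that the metric diameter of a polytope is realized at a pair of its vertices; everything else is a routine estimate.
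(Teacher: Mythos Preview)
Your argument is correct and in fact cleaner than the paper's. Both proofs exploit that the facet hyperplane is at distance exactly $1$ from the origin, but they use this differently. You take the tangency point $t$ (the foot of the perpendicular from the origin), apply Pythagoras to get $|v-t|^2 = |v|^2 - 1 \le 2\epsilon+\epsilon^2$ for each endpoint, and finish with the triangle inequality through $t$. The paper instead works with the triangle $0,u,v$ directly: it bounds the angle $\theta$ at the origin via $\cos(\theta/2)\ge 1/(1+\epsilon)$ (from the altitude being at least $1$) and then plugs into the law of cosines, arriving at $\dist(u,v)^2 \le (8\epsilon+4\epsilon^2)/(1+\epsilon)^2 + 4\epsilon+2\epsilon^2 < 12\epsilon+6\epsilon^2$.

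Your route is shorter, avoids the half-angle manipulation, and yields the sharper bound $\sqrt{8\epsilon+4\epsilon^2}$ that you note. One small remark: your opening reduction to a pair of \emph{vertices} is valid but unnecessary---the Pythagorean step $|v-t|^2=|v|^2-1$ already works for any point $v$ in the facet (convexity of the ball of radius $1+\epsilon$ gives $|v|\le 1+\epsilon$ for all such $v$), so you could bound the diameter over arbitrary points directly, as the paper does. Either way the proof goes through.
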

\begin{proof}
Let $u$ and $v$ be points in some facet $\sigma$ of $P$ and let $\Delta$ denote the triangle on vertices $0$, $u$, and $v$. Then we have
\[\dist(u, v)^2 = ||u||^2 + ||v||^2 - 2||u||||v|| \cos\theta\]
where $\theta$ is the angle between $||u||$ and $||v||$. Now as $||u||$ and $||v||$ are both between $1$ and $(1 + \epsilon)$ and the entire segment from $u$ to $v$ lives outside of the unit sphere we have that the altitude from the origin to the segment from $u$ to $v$ has length at least 1. Letting $\theta_1$ denote the angle between $u$ and the altitude and $\theta_2$ denote the angle between $v$ and the altitude as in Figure \ref{fig:circle}, we have
\[\cos \theta_1 \geq \frac{1}{1 + \epsilon},\]
and
\[\cos \theta_2 \geq \frac{1}{1 + \epsilon}.\]
 From this and $0 \leq \theta \leq \pi$, it follows that 
\[\cos \frac{\theta}{2} \geq \frac{1}{1 + \epsilon}, \]
and so 
\begin{eqnarray*}
\dist(u, v)^2 &\leq& 2(1 + \epsilon)^2 - 2 \left(2\frac{1}{(1 + \epsilon)^2} - 1\right) \\
&=& 4 + 4\epsilon + 2 \epsilon^2 - \frac{4}{(1 + \epsilon)^2}  \\
&=& \frac{8 \epsilon + 4 \epsilon^2}{(1 + \epsilon)^2} + 4 \epsilon + 2 \epsilon^2 \\
&<& 12 \epsilon + 6 \epsilon^2.
\end{eqnarray*}
\end{proof}

\begin{figure}[h]
\includegraphics[width = 2 in]{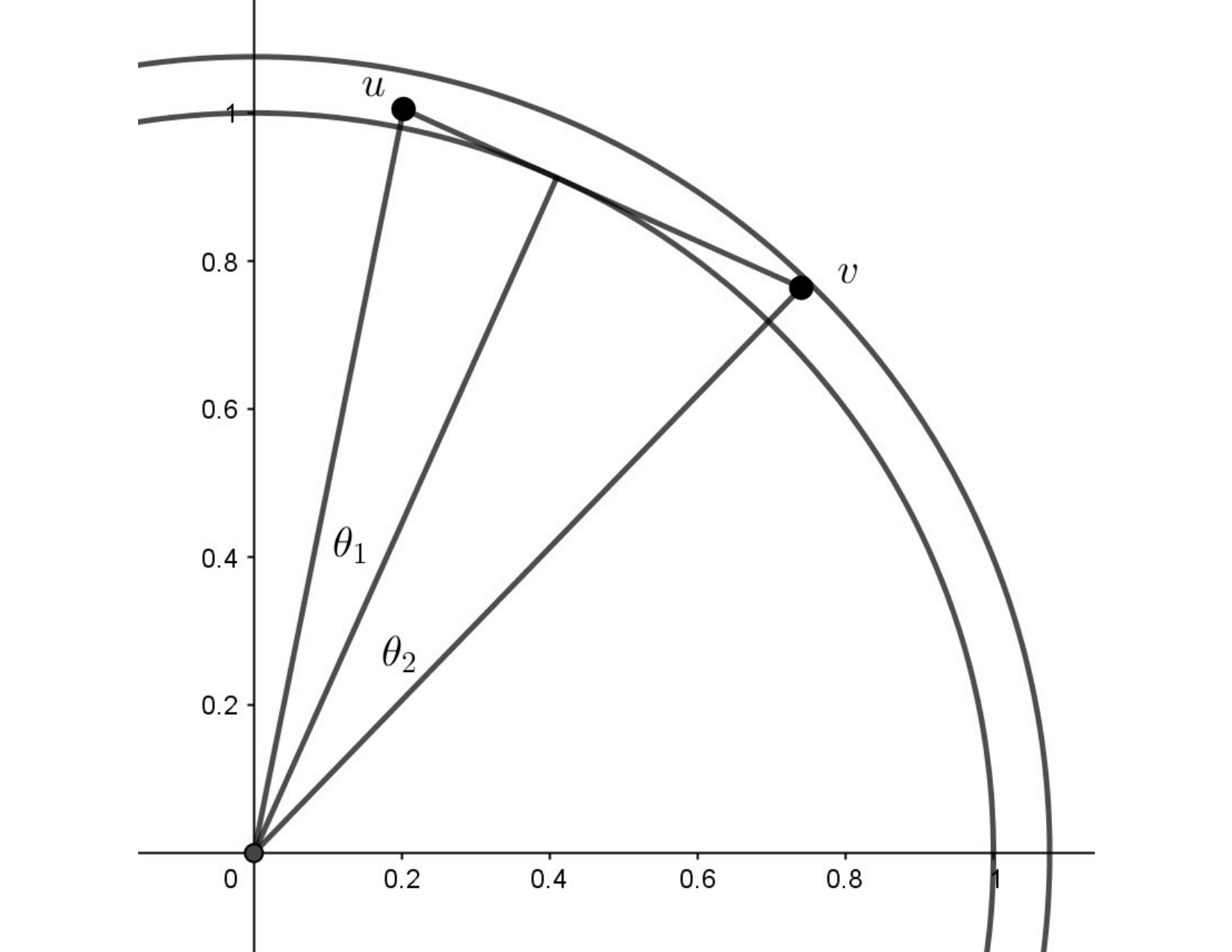}
\caption{Figure drawn using Geogebra \cite{geogebra} for proof of Claim \ref{triangle}}\label{fig:circle}
\end{figure}
We point out that what we've proved here implies that under the assumptions of Claim \ref{triangle} every edge of $P$ has length at most $\sqrt{12 \epsilon + 6 \epsilon^2}$, but the claim is quite a bit stronger; this will be convenient later.
\begin{claim}\label{shortedges}
For $\epsilon > 0$ bounded above by a sufficiently small constant and $P \sim P_1(d, m)$ with probability at least \[1 - \binom{m}{d} \left(1 - \frac{\sqrt{ \epsilon}^{d - 1}v_{d - 1}}{2dv_d} \right)^{m - d}\] every facet of $P^{\circ}$ has diameter at most $\sqrt{12 \epsilon + 6 \epsilon^2}$. Thus with high probability every facet of $P^{\circ}$ has diameter $O((\log m/m)^{1/(d - 1)})$ where the implied constant depends only on $d$. 
\end{claim}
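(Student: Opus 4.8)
The plan is to combine the deterministic estimate of Claim~\ref{triangle} with the cap‑avoidance computation already carried out in the proof of Lemma~\ref{HausdorffOutside}. First I would record the structural facts about $P\sim P_1(d,m)$: with probability $1$ it is simplicial, and with probability $1-o(1)$ it contains the origin in its interior (this is classical), so we may treat $P^\circ$ as a genuine bounded $d$-polytope whose facets are dual to the vertices of $P$. Since every vertex $x$ of $P$ lies on $S^{d-1}$, the facet of $P^\circ$ dual to $x$ lies in the hyperplane $\{y:\langle x,y\rangle=1\}$, which is tangent to the unit sphere, so $P^\circ$ is circumscribed around $S^{d-1}$. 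Hence the hypotheses of Claim~\ref{triangle} are met on the event that every vertex of $P^\circ$ lies within distance $1+\epsilon$ of the origin, and on that event every facet of $P^\circ$ has diameter at most $\sqrt{12\epsilon+6\epsilon^2}$.

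Next I would estimate the probability of the complementary (bad) event, which is exactly the computation in the proof of Lemma~\ref{HausdorffOutside} carried out with $p$ set to $1$ (there one passes to $Q$, which costs an extra factor of $p$; here we work with $P^\circ$ directly). A vertex of $P^\circ$ at distance more than $1+\epsilon$ from the origin is dual to a facet of $P$ whose supporting hyperplane lies within $\tfrac{1}{1+\epsilon}$ of the origin, equivalently to a choice of $d$ of the sample points spanning a facet that cuts off a spherical cap of height at least $\tfrac{\epsilon}{1+\epsilon}$ containing none of the remaining sample points. Union‑bounding over the $\binom{m}{d}$ choices of $d$ sample points and inserting the lower bound on the measure of such a cap from that proof yields failure probability at most $\binom{m}{d}\bigl(1-\tfrac{\sqrt{2\epsilon}^{\,d-1}v_{d-1}}{d v_d}\bigr)^{m-d}$, which is the first assertion; applying Claim~\ref{triangle} on the good event finishes it.

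For the asymptotic statement I would set $\epsilon=\epsilon(m)=K\bigl(\tfrac{\log m}{m}\bigr)^{2/(d-1)}$ for a constant $K=K(d)$ to be chosen. Then $\sqrt{2\epsilon}^{\,d-1}=(2K)^{(d-1)/2}\tfrac{\log m}{m}$, so the failure probability is at most $\binom{m}{d}\exp\!\bigl(-(2K)^{(d-1)/2}\tfrac{v_{d-1}}{d v_d}\tfrac{m-d}{m}\log m\bigr)\le m^{\,d-c(K,d)+o(1)}$, which tends to $0$ as soon as $K$ is large enough that $c(K,d)=(2K)^{(d-1)/2}v_{d-1}/(d v_d)>d$; and since $\epsilon(m)\to 0$ we get $\sqrt{12\epsilon+6\epsilon^2}=O(\sqrt{\epsilon})=O\bigl((\log m/m)^{1/(d-1)}\bigr)$ with implied constant depending only on $d$. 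The argument is essentially bookkeeping on top of Claim~\ref{triangle} and the method of Lemma~\ref{HausdorffOutside}; the only points needing any care are confirming that $P^\circ$ is a bounded polytope (origin in the interior of $P$), which contributes only an $o(1)$ term and does not affect the asymptotics, and calibrating the constant $K$ so that the polynomial factor $\binom{m}{d}$ is absorbed by the exponential. I do not expect a substantive obstacle here.
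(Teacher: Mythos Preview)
Your proposal is correct and follows exactly the paper's approach: the paper's proof is the single line ``This is immediate from Lemma~\ref{HausdorffOutside} and Claim~\ref{triangle},'' and you have simply unpacked that implication, together with the routine calibration of $\epsilon$ for the asymptotic statement. The extra care you take (origin in the interior, choosing $K$) is fine but not strictly needed beyond what the paper already claims.
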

\begin{proof}
This is immediate from Lemma \ref{HausdorffOutside} and Claim \ref{triangle}.
\end{proof}
Now that we have that all of the edges of $P^{\circ}$ will be short we turn our attention to the number of vertices above new facets in $Q$. This argument will be similar to the argument from Lemma \ref{HausdorffOutside}.
\begin{claim}\label{DiscreteCaps}
If $P$ is a nondegenerate simple $d$-polytope on at most $n$ vertices and $Q \sim \B(P, p)$ with $p \in [0, 1]$ then for any $t$ the probability that $Q$ has a new facet with at least $t$ vertices of $P$ above it is at most 
\[\binom{n}{d} p^d (1 - p)^t.\]
\end{claim}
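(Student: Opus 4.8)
The plan is to prove Claim~\ref{DiscreteCaps} by a straightforward union bound over the $d$-element subsets of the vertex set of $P$, in the same spirit as Lemma~\ref{HausdorffOutside}. Recall that $P$ is assumed to contain the origin in its interior. For an affinely independent $d$-subset $S$ of vertices of $P$, let $H_S$ be the hyperplane it spans, let $H_S^{+}$ be the open half-space bounded by $H_S$ that does \emph{not} contain the origin, and let $A_S$ be the set of vertices of $P$ lying in $H_S^{+}$. The orientation of $H_S$ is thus pinned down by the origin, which is what keeps the two sides of the hyperplane from costing an extra factor.

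The first step is the structural observation. Suppose $\sigma$ is a new facet of $Q$ with at least $t$ vertices of $P$ above it. Since $\sigma$ is a facet of $Q$ its affine hull is a hyperplane $H$, and since $\operatorname{aff}(\sigma) = H$ is $(d-1)$-dimensional we may pick $d$ affinely independent vertices $S$ of $\sigma$, so that $H_S = H$; because the vertices of $Q$ form a subset of the vertices of $P$, the set $S$ consists of vertices of $P$, all of which are included in the sample. The vertices of $P$ lying strictly above $\sigma$ — i.e.\ the elements of $\spcap(\sigma)$ not on $H$ — are exactly the elements of $A_S$, so $|A_S|\ge t$. Finally, since $\sigma$ is a facet of $Q$ and the origin lies in the interior of $Q$, the hyperplane $H_S$ supports $Q$ with $Q$ on the origin side, so no sampled vertex of $P$ lies in $H_S^{+}$; in particular every vertex of $A_S$ is excluded from the sample. (One does not even need the nondegeneracy hypothesis for this, though under it $S$ can be taken to be the full vertex set of $\sigma$.)

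Consequently the event that $Q$ has a new facet with at least $t$ vertices above it is contained in $\bigcup_{S} E_S$, where $S$ ranges over the at most $\binom{n}{d}$ affinely independent $d$-subsets of vertices of $P$ and $E_S$ is the event that all of $S$ is sampled, all of $A_S$ is unsampled, and $|A_S|\ge t$. For a fixed such $S$ with $|A_S|\ge t$ the sets $S$ and $A_S$ are disjoint, and since the vertices of $P$ are sampled independently,
\[\Pr(E_S) = p^{|S|}(1-p)^{|A_S|} = p^{d}(1-p)^{|A_S|} \le p^{d}(1-p)^{t},\]
while for $S$ with $|A_S|<t$ the event $E_S$ is impossible. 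Summing over the at most $\binom{n}{d}$ choices of $S$ yields the claimed bound $\binom{n}{d}\,p^{d}(1-p)^{t}$.

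I do not expect a genuine obstacle here: the argument is a direct union bound with no analytic content. The only point that needs care, rather than a real difficulty, is orienting each hyperplane $H_S$ canonically by means of the origin — exactly matching the definition of $\spcap(\cdot)$ already in force — so that one counts each potential new facet once and no spurious factor of $2$ appears from the two sides of $H_S$.
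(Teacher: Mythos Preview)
Your proof is correct and follows essentially the same approach as the paper: enumerate the at most $\binom{n}{d}$ possible $d$-subsets of vertices determining a candidate hyperplane, bound the probability that each one becomes a new facet with at least $t$ vertices above it by $p^d(1-p)^t$, and sum. The paper phrases the last step as an expectation bound followed by Markov's inequality rather than a direct union bound, and it invokes nondegeneracy to ensure every new facet is a simplex on exactly $d$ vertices, whereas you observe (correctly) that one can simply pick $d$ affinely independent vertices from any facet; these are cosmetic differences, not substantive ones.
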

\begin{proof}
Let $\sigma$ be a facet determined by $d$ vertices of $P$ which are not on any facet of $P$ with $t$ vertices above the hyperplane of $\sigma$. Then $\sigma$ is a facet of $Q$ only if every vertex of $\sigma$ belongs to $Q$ and the vertices above it do not belong to $Q$. Thus the probability that $\sigma$ is a facet of $Q$ is therefore at most $p^d (1 - p)^t$. As there are at most $\binom{n}{d}$ choices for $\sigma$ we have that the expected number of new facets with exactly $t$ vertices above each of them is at most
\[\binom{n}{d} p^d (1 - p)^t.\]
The claim now follows by Markov's inequality.
\end{proof}

Now we have all the pieces in place to bound from below how close to the origin the facets of $Q$ may be.
\begin{lemma}\label{insideprob}
If $d$ is fixed and $p$ and $m$ are such that $pm^{1/(d - 1)}$ tends to infinity as $m$ tends to infinity then with high probability every facet of $Q \sim P_2(d, m, p)$ is at distance at least
\[1 - \Omega\left(\frac{ \log^2(pm) \log ^{2/(d - 1)}m}{p^2m^{2/(d - 1)}}\right) \]
from the origin.
\end{lemma}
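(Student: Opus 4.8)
The plan is to flesh out the sketch given above: I would bound, uniformly and with high probability, the distance from the origin to the hyperplane of each \emph{new} facet $\sigma$ of $Q$ by walking from a vertex of $\sigma$ along a short path in the $1$-skeleton of $P^{\circ}$ to the vertex of $P^{\circ}$ lying highest over $\sigma$, and then extract a \emph{quadratic} gain from the fact that every vertex involved sits in a thin shell around $S^{d-1}$. I would begin by fixing a constant $C_1 = C_1(d)$ large enough that, with $\epsilon_m := C_1(\log m / m)^{2/(d-1)}$, the failure probabilities in Lemma~\ref{HausdorffOutside} and Claim~\ref{shortedges} are $o(1)$; thus with high probability every vertex of $P^{\circ}$ has norm at most $1+\epsilon_m$ and every edge of $P^{\circ}$ has length at most $L_m := \sqrt{12\epsilon_m + 6\epsilon_m^{2}} = O((\log m/m)^{1/(d-1)})$. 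Since $pm^{1/(d-1)}\to\infty$ forces $mp\to\infty$, with high probability also $0\in\operatorname{int} Q$, and by Theorem~\ref{vertexconcentration} with high probability $f_0(P^{\circ}) \le n_0 := 2F(d)m$; conditioning on the latter and setting $t_m := \lceil (2d/p)\log(mp)\rceil = O(\log(mp)/p)$, the estimate
\[\binom{n_0}{d}p^{d}(1-p)^{t_m} \le \frac{(2F(d))^{d}}{d!}(mp)^{d}e^{-pt_m} \le \frac{(2F(d))^{d}}{d!}(mp)^{-d} \longrightarrow 0\]
lets me invoke Claim~\ref{DiscreteCaps} to conclude that, with high probability, no new facet of $Q$ has $t_m$ or more vertices of $P^{\circ}$ strictly above its hyperplane. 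I stress that the sharp $\log(mp)$ in the statement, rather than $\log m$, comes precisely from using the bound $f_0(P^{\circ}) = O(m)$ here instead of the trivial $\binom{m}{d}$. From now on I work on the intersection of these high-probability events.

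Old facets of $Q$ lie on facet hyperplanes of $P^{\circ}$, each tangent to $S^{d-1}$ and so at distance exactly $1$ from the origin, so only new facets need attention. Fix a new facet $\sigma$, let $a_\sigma$ be its outer unit normal, and write its hyperplane as $H_\sigma = \{x : \langle a_\sigma, x\rangle = c\}$, so $c = \dist(0, H_\sigma) > 0$ because $0\in\operatorname{int} Q$. If $c \ge 1$ there is nothing to prove, so assume $c < 1$. Pick any vertex $v$ of $\sigma$; it is a vertex of $P^{\circ}$, so $1 \le \|v\| \le 1+\epsilon_m$ and $\langle a_\sigma, v\rangle = c$. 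Let $w^{*}$ maximize $\langle a_\sigma, \cdot\rangle$ over $P^{\circ}$; it is a vertex of $P^{\circ}$, so $\|w^{*}\|\le 1+\epsilon_m$, and since $P^{\circ}$ contains the unit ball, $M := \langle a_\sigma, w^{*}\rangle \ge 1 > c$. Hence both $v$ and $w^{*}$ lie in $\spcap(\sigma)$, which by Claim~\ref{uniquecaps} induces a connected subgraph of the $1$-skeleton of $P^{\circ}$. Since a new facet is not contained in any facet of $P^{\circ}$, the hyperplane $H_\sigma$ is not a facet hyperplane, so by nondegeneracy of $P^{\circ}$ it contains at most $d+1$ vertices of $P^{\circ}$; together with the bound from Claim~\ref{DiscreteCaps} this gives $|\spcap(\sigma)| \le t_m + d$, so there is a path in the $1$-skeleton from $v$ to $w^{*}$ using at most $t_m + d$ edges, each of length at most $L_m$, and therefore
\[\|w^{*} - v\| \le (t_m + d)L_m =: \ell_m = O\!\left(\frac{\log(mp)}{p}\Big(\frac{\log m}{m}\Big)^{1/(d-1)}\right).\]

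The step I expect to be the crux is upgrading this to the claimed bound, which involves $\ell_m^{2}$ rather than $\ell_m$; a naive estimate only gives $\dist(0, H_\sigma) \ge 1 - O(\ell_m)$. The extra power comes from splitting off the $a_\sigma$-component. Write $v = c\,a_\sigma + v_\perp$ and $w^{*} = M\,a_\sigma + w^{*}_\perp$ with $v_\perp, w^{*}_\perp \perp a_\sigma$. From $c^{2} + \|v_\perp\|^{2} = \|v\|^{2} \ge 1$ and $0\le c<1$ I get $\|v_\perp\| \ge \sqrt{1-c^{2}} \ge \sqrt{1-c}$, while from $M^{2} + \|w^{*}_\perp\|^{2} = \|w^{*}\|^{2} \le (1+\epsilon_m)^{2}$ and $M\ge 1$ I get $\|w^{*}_\perp\| \le \sqrt{2\epsilon_m + \epsilon_m^{2}} \le L_m$. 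By the reverse triangle inequality, $\|w^{*} - v\| \ge \|w^{*}_\perp - v_\perp\| \ge \|v_\perp\| - \|w^{*}_\perp\| \ge \sqrt{1-c} - L_m$, whence $\sqrt{1-c} \le \|w^{*} - v\| + L_m \le \ell_m + L_m \le 2\ell_m$, so
\[\dist(0, H_\sigma) = c \ge 1 - 4\ell_m^{2} = 1 - O\!\left(\frac{\log^{2}(mp)\,(\log m)^{2/(d-1)}}{p^{2}m^{2/(d-1)}}\right).\]
Since this holds uniformly over all facets of $Q$ on the intersection of the high-probability events above, the lemma follows.
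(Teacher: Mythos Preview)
Your proof is correct and follows the same overall strategy as the paper's: establish the high-probability events (vertex norms, edge lengths via Claim~\ref{shortedges}, vertex count via Theorem~\ref{vertexconcentration}, cap sizes via Claim~\ref{DiscreteCaps}), bound the Euclidean distance from a vertex of $\sigma$ to a point ``high above'' $H_\sigma$ by walking in $\spcap(\sigma)$, and then extract a quadratic gain. The one substantive variation is in this last geometric step. The paper walks from a vertex $v$ of $\sigma$ to the (non-vertex) point $w\in\partial P^{\circ}$ lying exactly on the ray $\R_{+}a_\sigma$, using the facet-diameter bound once more to jump from $w$ to a nearby vertex, and then applies the law of cosines with $w$ as the normal direction to get $\dist(0,\sigma)\ge 1-\tfrac12\dist(w,v)^2$. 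You instead walk to the \emph{vertex} $w^{*}$ maximizing $\langle a_\sigma,\cdot\rangle$ and use the orthogonal decomposition $v=c\,a_\sigma+v_\perp$, $w^{*}=M\,a_\sigma+w^{*}_\perp$ together with $M\ge 1$, $\|w^{*}\|\le 1+\epsilon_m$ to force $\|w^{*}_\perp\|$ small and hence $\sqrt{1-c}\le \ell_m+L_m$. This is a clean alternative that sidesteps the non-vertex point $w$ and the extra appeal to the facet-diameter bound, at the cost of a slightly less direct formula; the resulting bounds are of the same order.
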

\begin{proof}
By Theorem \ref{vertexconcentration} there exists a constant $c_1$ so that with high probability $P^{\circ}$ for $P \sim P_1(d, m)$ has at most $c_1 m$ vertices. In this case we may apply Claim \ref{DiscreteCaps} with $n = c_1 m$ and we have that there is $c_2 > 1$ so that with high probability every new facet of $Q \sim \B(P^{\circ}, p)$ has at most  $c_2 (\log (pm))/p$ vertices above it.  Moreover by Claim \ref{shortedges} there exists $c_3$ so that with high probability every facet of $P^{\circ}$ has diameter at most $c_3((\log m/m)^{1/(d - 1)})$. In the likely case that all of these conditions hold we have that for any hyperplane $\sigma$ determining a new facet of $Q$ there is a path of length at most 
\[\frac{c_2c_3 \log(pm) \log^{1/(d - 1)}m}{pm^{1/(d - 1)}}\]

from the point of $P^{\circ}$ where the orthogonal vector to $\sigma$ crosses $P^{\circ}$ to a vertex of $P^{\circ}$ on $\sigma$. This holds because this point of intersection, $w$ belongs to some facet of $P^{\circ}$ so it is within distance at most $c_3((\log m/m))^{1/(d - 1)}$ of some vertex of $P^{\circ}$ and from here we may follow a path in the graph induced by $\mathcal{C}(\sigma)$ back to some vertex of $\sigma$. As we have bounded the number of vertices above $\sigma$, the number of edges of this path is at most $c_2 (\log (pm))/p  - 1$ and each edge has length at most $c_3((\log m/m))^{1/(d - 1)}$ by the diameter assumption. However as every vertex of $\sigma$ and $w$ are all at distance at least 1 from the origin, the distance from the origin to $\sigma$ is at least 
\[1 - \frac{1}{2}\left(\frac{c_2c_3 \log(pm) \log^{1/(d - 1)}m}{pm^{1/(d - 1)}}\right)^2.\]
To see this, consider the triangle determined by $w$, the origin, and $v$ an arbitrary vertex of $P^{\circ}$ on $\sigma$. We have that $\dist(w, v) \leq \frac{c_2c_3 \log(pm) (\log m)^{1/(d - 1)}}{pm^{1/(d - 1)}}$, $||w|| \geq 1$, and $||v|| \geq 1$. Now the distance from the origin to the hyperplane $\sigma$ is given by the length of the projection of $v$ onto $w$, as $w$ is a normal vector to $\sigma$. Using $\langle w, v \rangle$ to denote the usual inner product of $w$ and $v$ we have:
\begin{eqnarray*}
\dist(0, \sigma) &=& \frac{|\langle w, v \rangle|}{||w||} \\
&\geq& \frac{||w||^2 + ||v||^2 - \dist(w, v)^2}{2||w||} .
\end{eqnarray*}
Note that equality holds on the second line as long as $\dist(w, v)^2 < ||w||^2 + ||v||^2$, otherwise its a trivial, negative lower bound.

Now as both $||w||$ and $||v||$ are at least one we have that $||w||^2 + ||v||^2 \geq 2||w||$ and so
\[\dist(0, \sigma) \geq 1 - \frac{\dist(w, v)^2}{2}.\]
\end{proof}

\begin{proof}[Proof of Theorem \ref{Hausdorffdistance}]
There exists a constant $\gamma$ depending on $d$ so that with high probability the boundary of $Q \sim P_2(d, m, p)$ lives outside the sphere of radius 
\[1 - \frac{ \gamma \log^2(mp) \log^{2/(d-1)} m}{p^2m^{2/(d - 1)}}\]
centered at the origin by Lemma \ref{insideprob}. Moreover there exists $\Gamma$ depending on $d$ so that with high probability $Q$ is contained entirely in the sphere of radius
\[1+ \frac{\Gamma \log^{2/(d - 1)} (pm^d)}{m^{2/(d - 1)}}  \]
centered at the origin by Lemma \ref{HausdorffOutside}.
\end{proof}
As a corollary to the proof of Theorem \ref{Hausdorffdistance} we have the following result about the volume of $Q \sim P_2(d, m, p)$.

\begin{corollary} 
For $d$ and $p$ fixed, with high probability the volume of $Q \sim P_2(d, m, p)$ is between 
\[v_d \left(1 - \frac{ \gamma \log^2(mp) \log^{2/(d - 1)}m}{p^2m^{2/(d - 1)}}\right)^{d}\]
and 
\[v_d \left(1+\frac{\Gamma\log^{2/(d-1)}(pm^d)}{m^{2/(d -1)}} \right)^d\]
where $\gamma$ and $\Gamma$ are constants depending only on $d$. 
\end{corollary}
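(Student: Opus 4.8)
The plan is to read the volume bounds directly off the two-sided sphere sandwich established inside the proof of Theorem \ref{Hausdorffdistance}. That proof produces two high-probability events: on the first, the boundary of $Q \sim P_2(d,m,p)$ lies entirely outside the sphere centered at the origin of radius $r_- = 1 - \gamma \log^2(mp)(\log m)^{2/(d-1)}/(p^2 m^{2/(d-1)})$ (Lemma \ref{insideprob}), and on the second, all of $Q$ lies inside the sphere centered at the origin of radius $r_+ = 1 + (\Gamma \log(pm^d)/m)^{2/(d-1)}$ (Lemma \ref{HausdorffOutside} together with Claim \ref{triangle}-type reasoning, exactly as in the proof of Theorem \ref{Hausdorffdistance}). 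I would also intersect in the event that $Q$ contains the origin in its interior, which holds with high probability since $pm^{1/(d-1)}\to\infty$ forces $pm\to\infty$. The intersection of three high-probability events is high probability, so I condition on all three holding.

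On this event I would establish the ball containment $B(0,r_-)\subseteq Q\subseteq B(0,r_+)$, writing $B(0,r)$ for the closed ball of radius $r$ about the origin. The right inclusion is immediate: every point of $Q$ is at distance at most $r_+$ from the origin, so $Q\subseteq B(0,r_+)$. For the left inclusion, since $Q$ is convex and contains the origin in its interior, every ray from the origin meets $\partial Q$ in exactly one point, which by the first event is at distance at least $r_-$ from the origin; hence the segment from the origin to that boundary point contains the corresponding radius-$r_-$ segment, and taking the union over all rays gives $B(0,r_-)\subseteq Q$.

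Finally, monotonicity of Lebesgue measure under inclusion, together with the scaling identity $\operatorname{vol}(B(0,r)) = v_d r^d$, yields
\[ v_d r_-^d \le \operatorname{vol}(Q) \le v_d r_+^d, \]
which is precisely the claimed two-sided bound after substituting the values of $r_-$ and $r_+$. I do not expect any genuine obstacle: the only point needing a word of care is the inner containment $B(0,r_-)\subseteq Q$, where one must invoke convexity of $Q$ and the fact that the origin lies in its interior with high probability, both of which are already available from the discussion preceding Claim \ref{uniquecaps} and from the hypothesis $pm^{1/(d-1)}\to\infty$.
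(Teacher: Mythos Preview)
Your proposal is correct and matches the paper's approach: the corollary is stated without proof in the paper, introduced as following directly from the two-sided sphere sandwich established in the proof of Theorem~\ref{Hausdorffdistance}, and you have simply written out the immediate deduction via $B(0,r_-)\subseteq Q\subseteq B(0,r_+)$ and volume monotonicity. Your extra care about the origin lying in the interior of $Q$ (needed for the inner containment) is a detail the paper leaves implicit but which is indeed covered by the standing assumption $pm^{1/(d-1)}\to\infty$.
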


\section{Concluding remarks}
Figure \ref{fig:examples} shows results of experiments examining $P_2(d, m, p)$ conducted in \polymake \cite{polymake:2000}. For these experiments we consider a stochastic process version of $\B(P, p)$. In each experiment we begin with $P^{\circ}$ for $P \sim P_1(d, m)$. Letting $n$ denote the number of vertices of $P^{\circ}$ the experiment is conducted by starting with $P^{\circ}$ and at each step successively deleting a random subset of vertices of the current polytope size $\lfloor 0.5 n \rfloor$ until fewer than $d + 1$ vertices remain. 

We ran this experiment 10 times for $d = 3$, $4$, and $5$ and $m$ selected for each $d$ so that the starting polytope $P^{\circ}$ had about 500 vertices. For $d = 3$, $m = 252$ (in this case there are always exactly 500 vertices since we have a simple 3-polytope with 252 facets), for $d = 4$, $m = 89$, and for $d = 5$, $m = 37$. The results in Figure \ref{fig:examples} show for each $q = 1 - p = 0, 0.05, 0.1, ..., 0.95, 1$, the mean and standard error for $f_{d - 1}/n$ across 10 trials for each value of $d$. Recall that $n$ is the number of vertices at the start of the process. For the random 3 polytopes in all trials $n = 500$. For the random 4-polytopes $n$ was on average 504.7 with a standard error of 6.68, and for the random 5-polytopes $n$ was on average $489.6$ with a standard error of 14.10.

For readability we have interpolated in a piecewise linear way in between the values of $q$ where we checked the number of facets. Moreover we point out that due to rounding since we delete $\lfloor 0.5 n \rfloor$ vertices at each step the plot is slightly off, for example the first bar at $q = 0.5$ reflects the average behavior across the experiments when the first $\lfloor 0.5 n \rfloor$ vertices are deleted, not literally when exactly five percent of the vertices are deleted. In any case, the results of these \polymake experiments alone suggest that it would be interesting to understand the behavior of $P_2(d, m, p)$ as $p$ ranges from $0$ to $1$, for instance to establish $p_0 \in [0, 1]$ which maximizes the asymptotic complexity.

\begin{figure}[h]
\centering
\includegraphics[scale = 1]{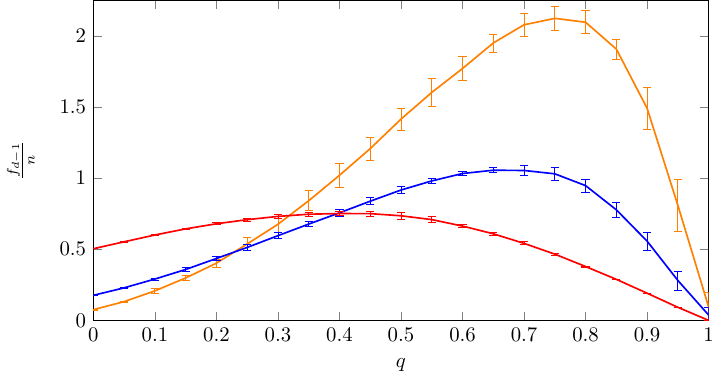}
\caption{Density of deleted vertices $q$ against the normalized number of facets for a random 3-polytope (red), 4-polytope (blue), and 5-polytope (orange)}\label{fig:examples}.
\end{figure}

 To this point we can only say something about the asymptotic complexity for small values of $q$. However, the restriction to small values of $q$ in the proof of Theorem \ref{upperbound} likely has more to do with the method of proof than anything about the random model; it seems completely reasonable to expect that linear complexity will hold for all fixed values of $q \in [0, 1]$. Essentially the argument to prove Theorem \ref{upperbounddeterministic} studies the combinatorial structure of $Q \sim \B(P, p)$ by studying the graph structure of the graph induced on the vertices of $P$ not included in the random sample. This approach is also implicit in the proof of Lemma \ref{insideprob}, a key lemma for the proof of Theorem \ref{Hausdorffdistance}.

Underlying the proof of Theorem \ref{upperbounddeterministic} is a question about percolation for a random induced subgraph of the 1-skeleton of $P$ where each vertex is sampled independently with probability $q$. If $q$ is small, something implicit in our theorem is that every component of the resulting graph will be small, of order $O(\log n)$. Connected subgraphs of $Q$ in turn tell us something about the number of facets of $P$. As $q$ grows larger there should come a point where the induced graph contains a \emph{giant component} as occurs in the Erd\H{o}s--R\'{e}nyi model and so the argument would need to be changed.

\section*{Acknowledgments}
The author is grateful to Michael Joswig and to Matthias Reitzner for helpful comments on an earlier draft. The author also gratefully acknowledges funding by Deutsche Forschungsgemeinshaft (DFG, German Research Foundation) Graduiertenkolleg ``Facets of Complexity" (GRK 2434).

\section*{Appendix: An application of the doubly-random model}

This appendix is motivated by an application of random polytopes considered in \cite{JKR}.  In \cite{JKR}, Joswig, Kaluba, and Ruff consider randomized methods for approximating the convex hull of a finite set for outlier detection in machine learning. Given a finite set $X$, arising from a real data set, the authors  use a \emph{dual bounding body} construction to approximate $X$; the goal is then to use the dual bounding body to decide whether a new point is close to $X$. This dual bounding body construction of \cite{JKR} is based on the dual $P^{\circ}$ of a random polytope $P \sim P_1(d, m)$, and an important step of their procedure is to determine the center of the dual bounding body generated; that is to find the average position of its vertices. Unfortunately, the dual bounding body may have very many vertices and so precisely determining the center may not be computationally feasible. To this end, the question becomes how well can a sample of the vertices from the dual bounding body approximate the center. Because the dual bounding body is a modified version of the dual of $P \sim P_1(d, m)$ taking a sample of its vertices is closely tied to $Q \sim P_2(d, m, p)$. For the purposes of applying what's been established here to the dual bounding body construction of \cite{JKR} we are interested in the following question.

\begin{question}\label{approximationquestion}
For $m > d \geq 2$, $\pi \in [0, 1]$ and $\epsilon > 0$, how large should be $N = N(m, d, \pi, \epsilon)$ so that with probability at least $1 - \pi$ the center of a random sample of $N$ vertices from $P^{\circ}$, $P \sim P_1(d, m)$ is within distance $\epsilon$ of the origin?
\end{question}

Here we do have to be a bit careful about the random sample is selected.  The vertices of $P_2(d, m, p)$ are sampled independently with probability $p$, thus the probability that a single vertex is included is the same for all vertices of the initial random polytope. In practice though to sample in this way one would need to know all the vertices of $P^{\circ}$. Instead one may take a sample of $N$ vertices of $P^{\circ}$ via optimizing uniform random linear objective functions on the unit sphere. This does not assign equal probability to the vertices of $P^{\circ}$ but for $N$ fixed and $m \rightarrow \infty$, in practice this likely does not make much of a difference so we ignore the technicalities between these two ways of randomly choosing a set of vertices.

One key step toward answering Question \ref{approximationquestion} is the result about the Hausdorff distance of $Q \sim P_2(d, m, p)$ to the unit sphere, especially Lemma \ref{HausdorffOutside}. A second step is the following lemma about how close the average value of points on a sphere of radius $R$ in $\R^d$ is to the origin.
\begin{lemma}\label{spherecenter}
Fix $d$ a positive integer and $R, \pi, \epsilon > 0$ then for $N \geq \frac{2R^2}{\epsilon^2}\left(1 + \frac{2}{d} \log \left(\frac{1}{\pi}\right) \right)$ points $v_1, ..., v_N$ chosen uniformly at random from the boundary of the $d$-dimensional ball of radius $R$ centered at the origin in $\R^d$ one has that with probability at least $1 - \pi$, 
\[\frac{1}{N}||v_1 + \cdots + v_N|| \leq \epsilon.\]
\end{lemma}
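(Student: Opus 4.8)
The plan is to reduce the statement to a Chernoff-type tail bound for $\|S\|^2$, where $S=v_1+\cdots+v_N$. The point is that a single uniform point on a sphere in $\R^d$ is sub-Gaussian with variance proxy $R^2/d$; the factor $1/d$ here (rather than $1/(d-1)$, which is all the logarithmic Sobolev inequality for the sphere would give) is exactly what produces the $\tfrac2d$ in the statement, so I would prove this sharp form of the single-vertex estimate by a direct computation rather than by quoting a black-box concentration inequality.

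First I would record moments. The $v_i$ are i.i.d.; by the symmetry of the uniform measure on $R\cdot S^{d-1}$ each has mean $0$ and $\|v_i\|=R$, so by independence $\E\|S\|^2=\sum_i\E\|v_i\|^2+\sum_{i\ne j}\langle\E v_i,\E v_j\rangle=NR^2$. Markov's inequality applied to $\|S\|^2$ already gives that $\tfrac1N\|S\|\le\epsilon$ fails with probability at most $R^2/(N\epsilon^2)$, but this has only polynomial dependence on $1/\pi$; to get logarithmic dependence one needs exponential concentration, and to get the dimensional gain $1/d$ one must use the geometry of the sphere. The core estimate is: for $v$ uniform on $R\cdot S^{d-1}$ and every $b\in\R^d$,
\[ \E\!\left[e^{\langle b,v\rangle}\right]\ \le\ \exp\!\left(\frac{R^2\|b\|^2}{2d}\right). \]
To prove it, write $\langle b,v\rangle\overset{d}{=}R\|b\|\,X$ with $X$ a coordinate of a uniform point on $S^{d-1}$, expand $\E[e^{tX}]=\sum_{k\ge0}\tfrac{t^{2k}}{(2k)!}\E[X^{2k}]$ (odd moments vanish), use the Beta-integral value $\E[X^{2k}]=(2k-1)!!\big/\prod_{j=0}^{k-1}(d+2j)\le(2k-1)!!/d^{k}$ together with $(2k-1)!!/(2k)!=1/(2^{k}k!)$, and sum the resulting series to $e^{t^2/(2d)}$. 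By independence this tensorizes: $\E[e^{\langle b,S\rangle}]\le\exp\!\big(NR^2\|b\|^2/(2d)\big)$ for every $b\in\R^d$, i.e. $S$ is a mean-zero vector in $\R^d$ with sub-Gaussian proxy $\sigma^2:=NR^2/d$.

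From here I would pass to a bound on $\|S\|$ by the standard Gaussian-integration trick: with $g\sim N(0,I_d)$ independent of $S$ one has $\E[e^{\mu^2\|S\|^2/2}]=\E[e^{\mu\langle g,S\rangle}]\le\E_g[e^{\mu^2\sigma^2\|g\|^2/2}]=(1-\mu^2\sigma^2)^{-d/2}$ for $\mu^2\sigma^2<1$, and optimizing Markov's inequality over $\mu$ yields the $\chi^2_d$-type tail $\Pr(\|S\|^2\ge\rho\,d\sigma^2)\le\exp\!\big(-\tfrac d2(\rho-1-\log\rho)\big)$ for all $\rho>1$. Since $d\sigma^2=NR^2$, choosing $\rho=N\epsilon^2/R^2$ turns this into a bound on $\Pr(\tfrac1N\|S\|\ge\epsilon)$; the hypothesis $N\ge\tfrac{2R^2}{\epsilon^2}\big(1+\tfrac2d\log\tfrac1\pi\big)$ gives $\rho\ge 2+\tfrac4d\log\tfrac1\pi$ (in particular $\rho\ge2>1$), and then the elementary inequality $\rho\ge2\log\rho$ for $\rho\ge2$ shows $\tfrac d2(\rho-1-\log\rho)\ge\log\tfrac1\pi$, so $\Pr(\tfrac1N\|S\|>\epsilon)\le\pi$, which is the claim.

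The only genuinely delicate step is the constant in the single-vertex estimate: obtaining $1/d$ rather than $1/(d-1)$ is what makes the stated bound sharp, and it is handled by the explicit moment expansion above (a generic Lévy-type concentration inequality on the product $\big(R\cdot S^{d-1}\big)^{N}$, using that $(v_1,\dots,v_N)\mapsto\|\sum_i v_i\|$ is $\sqrt N$-Lipschitz in the $\ell^2$-product metric, would reproduce the whole argument with $d$ replaced by $d-1$ and so would miss the stated constant). Everything after the single-vertex bound — the tensorization, the Gaussian-smoothing, and the final optimization — is routine, so I do not expect obstacles there beyond bookkeeping with the elementary inequalities $(1+\sqrt x)^2\le2(1+x)$ and $\rho\ge2\log\rho$.
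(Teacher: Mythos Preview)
Your argument is correct and, in fact, more rigorous than the paper's. The paper computes $\Var(X_i)=R^2/d$ for each coordinate, invokes the central limit theorem to say each coordinate average is approximately $\mathcal N(0,R^2/(dN))$, asserts without justification that the dependent squared norm is stochastically dominated by the independent-coordinate version $\tfrac{R^2}{dN}\chi^2(d)$, and then applies the Chernoff bound $\Pr(\chi^2(d)\ge\rho d)\le(\rho e^{1-\rho})^{d/2}$. You instead prove the exact sub-Gaussian bound $\E[e^{\langle b,v\rangle}]\le e^{R^2\|b\|^2/(2d)}$ by the explicit moment computation on the sphere, tensorize, and then recover precisely the same $\chi^2$-type tail $\exp\!\big(-\tfrac d2(\rho-1-\log\rho)\big)$ via the Gaussian-smoothing identity. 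From that point both proofs plug in $\rho=N\epsilon^2/R^2$ and check the arithmetic; your use of $\rho\ge 2\log\rho$ for $\rho\ge2$ cleanly handles the final step. The advantage of your route is that it is non-asymptotic (the CLT step in the paper is only heuristic for finite $N$) and it replaces the unproved domination claim by a genuine inequality. The inequality $(1+\sqrt x)^2\le2(1+x)$ you mention at the end is not actually used anywhere in your argument and can be dropped.
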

In the case that $d = 1$ this lemma gives a concentration inequality for summing up $N$ random numbers each of which is uniformly selected from $\{-1, 1\}$. This is a problem that has been considered in the past, see for example Appendix A of \cite{AS}, but it seems that a result like Lemma \ref{spherecenter} as precise as we need here and for $d > 1$ does not appear in the literature. 
\begin{proof}[Proof of Lemma \ref{spherecenter}]
Let $d$, $R$, $\pi$, and $\epsilon$ be given as in the statement. Let $X_i$ denote the random variable given by projection of a random point on the sphere of radius $R$ centered at the origin in $\R^d$ onto the $i$th coordinate. By symmetry $\E(X_i) = 0$ and for any pair $1 \leq i \leq j \leq n$, $X_i$ and $X_j$ have the same distribution. Moreover we always have
\[\sum_{i = 1}^d (X_i)^2 = R.\]
Thus $\E(X_i^2) = R^2/d$, hence $\Var(X_i) = \E(X_i^2) - [E(X_i)]^2 = R^2/d$. If we sample $v_1, ..., v_N$, uniformly at random from the sphere of radius $R$ in $\R^d$, then by the central limit theorem the average of the $i$th coordinate of $v_1, ..., v_N$ will be asymptotically in $N$ distributed as a Gaussian with mean $0$ and variance $R^2/(dN)$.

Now if the coordinate averages were independent from one another we could approximate 
\[\frac{||v_1 + \cdots + v_{N}||^2}{N^2}\]
 by the sum of $d$ independent copies of $[\mathcal{N}(0, R^2/(dN))]^2$. Such a sum is distributed as 
\[\frac{R^2}{dN} \chi^2(d) \]
where $\chi^2(d)$ denotes the chi-squared distribution with $d$ degrees of freedom.

The coordinate averages of course are not independent from one another, however it does hold that
\[\Pr\left(\frac{||v_1 + \cdots + v_N||^2}{N^2} \geq \epsilon^2\right) \leq \Pr\left(\frac{R^2}{dN}\chi^2(d) \geq \epsilon^2\right). \]
That is, sampling each coordinate average independently from a normal distribution with mean zero and variance $R^2/(d N)$ could only increase the probability that the square of the sum of the averages exceeds $\epsilon^2N^2$. Next by Chernoff bound we have that
\[\Pr \left(\frac{R^2}{dN}\chi^2(d) \geq \epsilon^2\right) \leq \left(\frac{\epsilon^2 N}{R^2} \exp(1 - \epsilon^2 N/R^2) \right)^{d/2}.\]
Now we see that by setting 
\[N \geq \frac{2R^2}{\epsilon^2}\left(1 + \frac{2}{d} \log \left(\frac{1}{\pi}\right) \right) \]
we have that the right hand side is at most $\pi$. Indeed

\begin{eqnarray*}
&&\left[2 \left(1 - \frac{2}{d} \log \pi \right) \exp\left(1 - \left(2\left(1 + \frac{2}{d} \log\left(\frac{1}{\pi}\right) \right)\right)\right)\right]^{d/2}\\
&& \leq 2^{d/2} \pi^{-1} \exp \left(-d/2 + 2\log(\pi)\right) \\
&& \leq \pi.
\end{eqnarray*}
\end{proof}

We close by describing how one could use Lemma \ref{HausdorffOutside} and Lemma \ref{spherecenter} to answer Question \ref{approximationquestion}. As we are already ignoring the discrepancy in two methods of choosing the vertices anyway, we omit some details here and don't make a precise statement, but sketch the idea.

Given $d$, $m$, $\epsilon$ and $\pi$, one can apply Lemma \ref{HausdorffOutside} to find $p$ so that every vertex of $Q \sim P_2(d, m, p)$ is within distance $1 + \epsilon$ of the origin. This $p$ correspond to $N_1 = pF(d)m$. Next we take $R$ of Lemma \ref{spherecenter} to be $1 + \epsilon$ and we take $N_2$ as in Lemma \ref{spherecenter}, if $N_2 < N_1$ then $N = N_2$ gives an answer to Question \ref{approximationquestion} for some easily computable scalar multiples of $\epsilon$ and $\pi$.  Some values of $\epsilon$ and $\pi$ are too small relative to $m$ and $d$ for $N_2$ to be smaller than $N_1$, and in this case we wouldn't get an answer, but this is less of a concern as $m$ grows.

\bibliography{ResearchBibliography}
\bibliographystyle{amsplain}
\end{document}